
\documentclass[11pt, a4paper]{amsart}

\usepackage{amsfonts}
\usepackage{amsthm}
\usepackage{amstext}
\usepackage{amsmath}
\usepackage{amscd}
\usepackage{amssymb}
\usepackage[mathscr]{eucal}
\usepackage{url}
\usepackage{enumitem}

\oddsidemargin -0.25cm \evensidemargin -0.25cm \topmargin -0.5cm
\setlength{\textwidth}{16.5cm} 
\setlength{\textheight}{22.7cm} 

\newtheorem{theorem}{Theorem}
\newtheorem{corollary}[theorem]{Corollary}
\newtheorem{proposition}[theorem]{Proposition}
\newtheorem{lemma}[theorem]{Lemma}

\theoremstyle{definition}
\newtheorem{definition}[theorem]{Definition}
\newtheorem{remark}[theorem]{Remark}

\numberwithin{equation}{section}

\newcommand\mL{L\kern-0.08cm\char39}

\newcommand{\Int}{{\rm Int}}

\newcommand{\CC}{\mathcal{C}}

\newcommand\Orb{{\rm Orb}}


\begin{document}

\title[One scrambled pair implies Li-Yorke chaos]{For graph maps, one scrambled pair implies Li-Yorke chaos}

\date{May 16, 2012}

\author[S. Ruette]{Sylvie Ruette}
\address{Laboratoire de Math\'ematiques, B\^atiment 425, CNRS UMR 8628, Universit\'e Paris-Sud 11, 91405 Orsay cedex, France}
\email{Sylvie.Ruette@math.u-psud.fr}

\author[\mL . Snoha]{L$\!$'ubom\'\i r Snoha}
\address{Department of Mathematics, Faculty of Natural Sciences,
            Matej Bel University, Tajovsk\'eho 40, 974 01 Bansk\'a Bystrica,
            Slovakia}
\email{Lubomir.Snoha@umb.sk}

\thanks{Most of this work was done while the second author was visiting Universit\'e Paris-Sud 11 at Orsay. The invitation and the kind hospitality of this institution are gratefully acknowledged. The second author was also partially supported by VEGA grant 1/0978/11 and by the Slovak Research and Development Agency under the contract No.~APVV-0134-10.}

\subjclass[2010]{Primary 37E25; Secondary 37B05, 54H20}

\keywords{Scrambled pair, scrambled set, Li-Yorke chaos, graph, countable metric space}

\begin{abstract}
For a dynamical system $(X,f)$, $X$ being a compact metric space with metric $d$
and $f$ being a continuous map $X\to X$, a set $S\subseteq X$
is scrambled if every pair $(x,y)$ of distinct points in $S$ is scrambled,
i.e., $\liminf_{n\to+\infty}d(f^n(x),f^n(y))=0$ and
$\limsup_{n\to+\infty}d(f^n(x),f^n(y))>0$. The system $(X,f)$ is
Li-Yorke chaotic if it has an uncountable scrambled set.  It is
known that, for interval and circle maps, the existence of a scrambled pair
implies Li-Yorke chaos, in fact the existence of a Cantor scrambled set.
We prove that the same result holds for graph maps.
We further show that on compact countable metric spaces
one scrambled pair implies the existence of an infinite scrambled set.
\end{abstract}

\maketitle


\section{Introduction and main results}\label{S:intro}

A \emph{(topological) dynamical system} is a pair $(X,f)$ or, less formally, a map $f: X\to X$,
where $X$ is a compact metric space and $f\colon X\to X$ is continuous.
The distance in any metric space will be denoted by $d$.

Let $f\colon X\to X$ be a dynamical system. The \emph{orbit} (under $f$) of a set $A\subseteq X$ is
$\Orb_f(A):=\bigcup_{n\ge 0}f^n(A)$, and the orbit $\Orb_f(x)$ of a point $x\in X$ is simply equal
to $\Orb_f(\{x\})$. The sequence $(f^n(x))_{n=0}^\infty$ is the \emph{trajectory} of $x$.
The \emph{$\omega$-limit set} (under $f$) of a point $x$, denoted by $\omega_f(x)$, is the set of all limit points
of the trajectory of $x$. It is a closed set and $f(\omega_f(x))=\omega_f(x)$. An $\omega$-limit set $\omega_f(x)$ is called \emph{orbit-enclosing} if there exists $y\in \omega_f(x)$ (hence $\Orb_f(y) \subseteq \omega_f(x)$) with $\omega_f(y)= \omega_f(x)$.

\begin{definition}
Let $f\colon X\to X$ be a dynamical system.  If
$x,y\in X$ and $\delta>0$, $(x,y)$ is a \emph{$\delta$-scrambled pair}
if
$$
\liminf_{n\to+\infty}d(f^n(x),f^n(y))=0\quad\text{ and }\quad
\limsup_{n\to+\infty}d(f^n(x),f^n(y))\ge \delta,
$$
and $(x,y)$ is a scrambled pair if it is $\delta$-scrambled for some
$\delta>0$.  A set $S\subseteq X$ is \emph{$\delta$-scrambled}
(resp. \emph{scrambled}) if for all $x,y\in S, x\neq y$, $(x,y)$ is a
$\delta$-scrambled (resp. scrambled) pair.  The dynamical system
$(X,f)$ is \emph{Li-Yorke chaotic} if there exists an uncountable
scrambled set in $X$.
\end{definition}

There are compact metric spaces which do not admit continuous selfmaps with scrambled pairs, say finite spaces and rigid spaces (a space is \emph{rigid} if it does not admit any continuous selfmap except of the identity and the constant maps). A less trivial example of such a space is the subspace $\{0\}\cup \{1,1/2,1/3,\dots\}$ of the real line (an easy proof is left to the reader).

On the other hand, every metric space containing an \emph{arc}, i.e. a homeomorphic copy of the real compact interval $I=[0,1]$, admits a continuous selfmap having scrambled pairs. This follows from the following facts:
1) $I$ admits such a map, 2) $I$ is an absolute retract for the class of all metric spaces~\cite{Bor} (recall that a subspace $S$ of $X$ is called a \emph{retract} of $X$ if there exists a retraction of $X$ onto $S$, i.e. a continuous map $r: X\to S$ such that $r(s)=s$ for all $s\in S$, and a metric space $A$ is called an \emph{absolute retract} for the class of all metric spaces, if for every metric space  $X$, every subspace of $X$ homeomorphic with $A$ is a retract of $X$) and 3) If $S\subseteq X$ are compact metric spaces with $S$ being a retract of $X$ and $S$ admitting a continuous selfmap with scrambled pairs then also $X$ admits a continuous selfmap with scrambled pairs (say the composition of a retraction $X\to S$ with such a selfmap of $S$).

If a system has a scrambled pair, it may happen that it has no scrambled set with three points. An example of a triangular map in the square with this property is found in~\cite{FPS}. Using the fact that the square (hence also the disc) is an absolute retract for the class of all metric spaces, we easily get that every metric space containing a subset homeomorphic with $2$-dimensional disc, in particular every manifold with dimension $\geq 2$, admits a continuous selfmap having a scrambled pair but no scrambled set with more than two points. In~\cite{GL} it was shown that the Cantor set and the Warsaw circle also admit continuous self-maps with this property. Examples of symbolic systems with only boundedly finite, or countable, scrambled sets were given in~\cite{BDM} --- symbolic systems generated by primitive constant-length substitutions have at most finite scrambled sets (some have no scrambled sets at all and others have finite scrambled sets); systems with infinite but only countable scrambled sets are obtained as inverse limits of a sequence of constant-length substitution systems. In~\cite{BHuS} it is shown that for every nonempty subset of the set $\{2,3,\dots \} \cup \{\aleph _0\} \cup \{c\}$, where $\aleph_0$ is the cardinal number of the set of positive integers and $c$ is the cardinal number of the reals, there is a dynamical system $(X, f)$ such that the set of cardinalities of all maximal scrambled sets of the system coincides with this set. Moreover, given any $\delta>0$, all scrambled sets of $(X, f)$ may be assumed to be $\delta$-scrambled and $X$ can be chosen to be an arc-wise connected one-dimensional planar continuum.

\smallskip

We are interested in sufficient conditions for the existence of a ``large" scrambled set in the sense of cardinality. Our main motivation is the following result proved by Kuchta and Sm\'\i tal.

\begin{theorem}[Kuchta, Sm\'\i tal \cite{KS}]\label{theo:scrambledpair-interval}
Let $I$ be a compact interval and $f\colon I \to I$ a continuous map.
If $f$ has a scrambled pair, then $f$ has a $\delta$-scrambled Cantor set for
some $\delta>0$.
\end{theorem}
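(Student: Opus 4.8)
The plan is to argue by a dichotomy on the topological entropy $h(f)$; remarkably, the hypothesis of one scrambled pair will be used only once, namely to produce an infinite $\omega$-limit set in the zero-entropy regime.

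\emph{Positive entropy.} If $h(f)>0$, I would invoke Misiurewicz's theorem: some iterate $f^{N}$ has a horseshoe, i.e.\ there are compact subintervals $J_{0},J_{1}\subseteq I$ with disjoint interiors such that $f^{N}(J_{0})\cap f^{N}(J_{1})\supseteq J_{0}\cup J_{1}$. Put $\delta:=\operatorname{dist}(J_{0},J_{1})>0$. A routine coding argument (nested compactness) produces, for each $\xi=(\xi_{k})_{k\ge0}\in\{0,1\}^{\IN}$, a point $x_{\xi}$ with $f^{Nk}(x_{\xi})\in J_{\xi_{k}}$ for all $k$, and $\xi\mapsto x_{\xi}$ can be taken to be a homeomorphic embedding of the Cantor set. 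Restricting to a Cantor set $\Sigma\subseteq\{0,1\}^{\IN}$ whose distinct elements agree on arbitrarily long runs (forcing $\liminf_{n}d(f^{n}x_{\xi},f^{n}x_{\eta})=0$) but disagree infinitely often (so that $f^{Nk}x_{\xi}$ and $f^{Nk}x_{\eta}$ lie in the \emph{different} intervals $J_{0},J_{1}$ for infinitely many $k$, forcing $\limsup_{n}d(f^{n}x_{\xi},f^{n}x_{\eta})\ge\delta$) yields the desired $\delta$-scrambled Cantor set. No scrambled pair is needed here.

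\emph{Zero entropy: reduction to a solenoid.} Assume $h(f)=0$ and let $(x,y)$ be a $\delta_{0}$-scrambled pair. First I would observe that at least one of $\omega_{f}(x),\omega_{f}(y)$ is infinite: a finite $\omega$-limit set is a periodic orbit, and if both were periodic orbits then $\liminf_{n}d(f^{n}x,f^{n}y)=0$ would force them to coincide and both trajectories to converge to this common periodic orbit \emph{in step}, whence $\limsup_{n}d(f^{n}x,f^{n}y)=0$, contradicting $\limsup_{n}d(f^{n}x,f^{n}y)\ge\delta_{0}$. Say $\omega_{f}(x)$ is infinite. By the structure theory of interval maps an infinite $\omega$-limit set is either solenoidal or a basic set, and basic sets carry positive entropy; so, as $h(f)=0$, $\omega_{f}(x)$ is solenoidal. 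Thus there is a decreasing sequence of periodic intervals $I\supseteq K_{0}\supseteq K_{1}\supseteq\cdots$ with $f^{2^{j}}(K_{j})\subseteq K_{j}$ and with the ``bins'' $K_{j},f(K_{j}),\dots,f^{2^{j}-1}(K_{j})$ having pairwise disjoint interiors, such that $f^{n}(x)$ eventually enters $\mathcal K_{j}:=\bigcup_{i}f^{i}(K_{j})$ for every $j$ and $\omega_{f}(x)$ contains a minimal Cantor set $Q\subseteq\bigcap_{j}\mathcal K_{j}$ on which $f$ acts as an adding machine. From now on $y$ plays no role.

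\emph{The construction, and the main obstacle.} It remains to manufacture a $\delta$-scrambled Cantor set out of this solenoidal tower, with $\delta$ \emph{fixed}; this is the crux. It is delicate precisely because $h(f)=0$ leaves no horseshoe to code with, and the naive attempt --- in which distinct itineraries first diverge at deeper and deeper levels $K_{j}$ --- only gives pairs whose $\limsup$-separation tends to $0$. My plan is to fix a level $j_{0}$ and two of its bins $B,B'$, both met by $Q$, with $\operatorname{dist}(B,B')\ge 2\delta$ (possible for suitable $j_{0},\delta$ since $Q$ is a Cantor set spread among genuinely separated bins). Then, for each binary sequence $\alpha=(\alpha_{k})_{k\ge1}$, I would build a point $x_{\alpha}$ by a backward-induction grafting along a preimage branch of the trajectory of $x$: at a prescribed sequence of decision epochs, located in the ``room'' between the levels $K_{j_{0}+k-1}$ and $K_{j_{0}+k}$, the trajectory of $x_{\alpha}$ either descends directly into the next level ($\alpha_{k}=0$) or first makes a controlled detour that shuttles between $B$ and $B'$ before descending ($\alpha_{k}=1$). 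Continuity together with the periodic-interval combinatorics --- here one-dimensionality enters, through the intermediate value theorem and the existence of monotone branches --- lets one place such $x_{\alpha}$ in suitable tiny subintervals and makes $\alpha\mapsto x_{\alpha}$ continuous. Finally, as in the positive-entropy case, I would keep only the $x_{\alpha}$ for $\alpha$ in a Cantor set $\Sigma$ of sequences that pairwise disagree infinitely often (so the corresponding decision-epoch detours force $\limsup_{n}d(f^{n}x_{\alpha},f^{n}x_{\beta})\ge\delta$) and pairwise agree on arbitrarily long runs (so that during such a run both trajectories are driven deep into every $\mathcal K_{j}$ and come arbitrarily close, giving $\liminf=0$); this image is the required Cantor $\delta$-scrambled set. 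The main obstacle is to make the grafting rigorous: to show that the detours can really be realized inside the interval while keeping the points distinct and their dependence on $\alpha$ a homeomorphism, and that the separations they create are of a uniform size $\delta$.
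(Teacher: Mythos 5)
Your positive-entropy half is standard and essentially correct in outline (one small point: Misiurewicz's theorem as you quote it gives intervals with disjoint \emph{interiors}, which may share an endpoint, so you must first pass to a sub-horseshoe of a higher iterate with genuinely disjoint intervals before setting $\delta:=\mathrm{dist}(J_0,J_1)>0$; this is routine). The genuine gap is in the zero-entropy half, exactly at the point you flag as the ``main obstacle'' and immediately after you declare that ``from now on $y$ plays no role''. Once $y$ is discarded, the only data left are: $h(f)=0$ and $\omega_f(x)$ is an infinite solenoidal $\omega$-limit set carrying an adding machine. These hypotheses do \emph{not} imply the existence of even one scrambled pair, let alone a Cantor $\delta$-scrambled set: there are non-chaotic interval maps of type $2^\infty$ (Delahaye's examples; the Feigenbaum map is the standard instance) which have precisely this structure --- nested cycles of periodic intervals of periods $2^n$ and an infinite solenoidal $\omega$-limit set --- and in which every trajectory is approximable by cycles, so no scrambled pair exists. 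Consequently the grafting construction you sketch cannot be justified from the stated data: in such maps, once an orbit is inside the level-$j_0$ cycle of intervals its itinerary among the bins is rigidly cyclic, so points performing your prescribed ``detours shuttling between $B$ and $B'$'' at arbitrarily late times simply do not exist; if they did, they would already furnish a scrambled pair, which these maps lack. So the uniform-$\delta$ separation step is not a technicality but an essential obstruction, and the scrambled pair must re-enter the argument in the zero-entropy regime --- e.g.\ to show that not every trajectory is approximable by cycles, equivalently that the ``portions'' of the solenoidal structure do not shrink, which is what produces a fixed $\delta$ and the room for the detours.

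For comparison, the paper does not prove this theorem from scratch either: it is quoted from Kuchta and Sm\'\i tal and observed to be an immediate consequence of the dichotomy of Sm\'\i tal (cf.\ Jankov\'a--Sm\'\i tal): every interval map either has a Cantor $\delta$-scrambled set for some $\delta>0$, or all of its trajectories are approximable by cycles, and a scrambled pair is incompatible with the second alternative. The hard content you would have to supply to complete your route is exactly the zero-entropy analysis behind that dichotomy (Sm\'\i tal's characterization of chaotic zero-entropy maps), which is substantially deeper than the sketch you give; your reduction to a solenoid is the easy part of it.
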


It is difficult to prove this theorem from scratch. However, when one uses other
deep results from interval dynamics, it becomes obvious. In fact, it is an immediate
consequence of the dichotomy which holds for interval maps --- by~\cite{Sm}, cf.~\cite{JaSm},
any continuous map $f\colon I \to I$ satisfies one of the following two mutually exclusive properties:
\begin{enumerate}
  \item [(i)] $f$ has a $\delta$-scrambled Cantor set for some $\delta>0$;
  \item [(ii)] all trajectories of $f$ are \emph{approximable by cycles},
that is, for any $x$ and any $\varepsilon>0$ there is a periodic
point $p$ such that $\limsup_{n\rightarrow +\infty}
|f^{n}(x)-f^{n}(p)|<\varepsilon$.
\end{enumerate}

\smallskip

Later Kuchta~\cite{K} showed that the result from Theorem~\ref{theo:scrambledpair-interval} is true also for continuous selfmaps of the circle. In~\cite[p.315]{BHuS} the authors ask a question whether this is still true for graph maps. The main result of the present paper is an affirmative answer to this question.

Throughout the paper, a \emph{(topological) graph} is a non-degenerate compact
connected metric space $G$ containing a finite subset $V$ such that each
connected component of $G\setminus V$ is homeomorphic to an
open interval. A \emph{branching point} is a point having no neighborhood
homeomorphic to an interval (of any kind). The set of branching points is finite (it
is included in $V$).
A \emph{graph map} is a dynamical system on a graph, that is,
a continuous map $f\colon G\to G$, where $G$ is a graph.

In the present paper we prove the following theorem.

\begin{theorem}\label{theo:scrambedpairG}
Let $f\colon G\to G$ be a graph map. If $f$ has a scrambled pair, then
it has a Cantor $\delta$-scrambled set for some $\delta>0$.
\end{theorem}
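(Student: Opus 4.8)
The plan is to organize the proof around the topological entropy of $f$: if $h(f)>0$ I will produce a Cantor $\delta$-scrambled set directly, while if $h(f)=0$ I will show that $f$ has \emph{no} scrambled pair at all, so that the hypothesis of Theorem~\ref{theo:scrambedpairG} forces the first alternative. This is the graph analogue of the way Theorem~\ref{theo:scrambledpair-interval} is deduced from the interval dichotomy (i)/(ii).

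Suppose first that $h(f)>0$. By the structure theory of graph maps, some iterate $f^N$ then has a horseshoe lying inside a single edge of $G$: there are non-degenerate closed arcs $J_0,J_1$ with disjoint interiors, contained in a common arc $\widehat J\subseteq G$, such that $f^N(J_i)\supseteq J_0\cup J_1$ for $i=0,1$, and hence, by connectedness, $f^N(J_i)\supseteq\widehat J$. On the compact $f^N$-invariant set $\Lambda=\bigcap_{k\ge0}f^{-Nk}(\widehat J)$, equipped with the semiconjugacy onto the $2$-shift that the horseshoe provides, one runs the classical Sm\'\i tal-type construction --- legitimate here because everything now takes place inside an arc, exactly as in the proof of Theorem~\ref{theo:scrambledpair-interval} --- to obtain a Cantor set that is $\delta$-scrambled for $f^N$ with $\delta$ comparable to $d(J_0,J_1)$: one selects a Cantor family of itineraries that agree on arbitrarily long blocks matching a fixed periodic pattern (this pushes the corresponding points next to a fixed periodic point of $f^N$, hence close together, forcing $\liminf=0$) yet differ infinitely often (forcing $\limsup\ge\delta$). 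A set that is $\delta$-scrambled for $f^N$ is $\delta$-scrambled for $f$, since $\liminf_n d(f^nx,f^ny)\le\liminf_k d(f^{Nk}x,f^{Nk}y)$ and $\limsup_n d(f^nx,f^ny)\ge\limsup_k d(f^{Nk}x,f^{Nk}y)$. This half of the argument does not even use the existence of a scrambled pair.

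Now suppose $h(f)=0$ and, for a contradiction, that $(x,y)$ is a $\delta$-scrambled pair. Since $\liminf_n d(f^nx,f^ny)=0$, along a subsequence $f^{n_k}x\to p$ and $f^{n_k}y\to p$ for one and the same $p\in\omega_f(x)\cap\omega_f(y)$. The sets $\omega_f(x)$ and $\omega_f(y)$ cannot both be finite: if both were periodic orbits, the common point $p$ would force each to equal $P:=\Orb_f(p)$, and then the phase of $(f^nx)_n$ and of $(f^ny)_n$ relative to $P$ (the index of the nearest point of $P$) would have to coincide for all large $n$ because $\liminf_n d(f^nx,f^ny)=0$, whence $\limsup_n d(f^nx,f^ny)=0$, a contradiction. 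So one of them, say $\omega_f(x)$, is infinite. By the structure theory of $\omega$-limit sets of zero-entropy graph maps, such a set contains an orbit-enclosing infinite $\omega$-limit set $\omega\subseteq\omega_f(x)$ which is minimal and, up to a cyclic permutation of finitely many pieces, of one of three types: an odometer carried by a nested cyclic system of subgraphs whose periods divide one another and tend to infinity; an irrational rotation of a circle; or a Denjoy minimal Cantor set inside a circle. In each case $f|_\omega$ is an almost one-to-one extension of an equicontinuous system, so $\omega$ itself carries no scrambled pair; more to the point, for each of the three types one shows that two forward orbits that are asymptotically attracted to $\omega$ are forced into a common eventual \emph{address} (the same subgraph at each level in the odometer case, the same rotational position in the circle cases), so that if they are proximal they are in fact asymptotic. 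Since $p\in\omega_f(x)\cap\omega_f(y)$ and $\omega$ is minimal, $\omega=\omega_f(p)\subseteq\omega_f(y)$ as well; so --- after handling the possible extra, non-recurrent part of $\omega_f(x)$ and $\omega_f(y)$ --- both orbits are attracted to $\omega$, and being proximal they are asymptotic, i.e.\ $\lim_n d(f^nx,f^ny)=0$, contradicting $\limsup_n d(f^nx,f^ny)\ge\delta$. Thus a zero-entropy graph map has no scrambled pair, and the theorem follows.

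The positive-entropy half is essentially bookkeeping on top of standard facts; the real work is in the zero-entropy half, in two places. The first is obtaining, from the single scrambled pair, an orbit-enclosing infinite $\omega$-limit set of one of the three clean types to which \emph{both} orbits of the pair are genuinely attracted: a priori the orbits of $x$ and $y$ share only the point $p$, and $\omega_f(x)$, $\omega_f(y)$ may be larger, different, and non-minimal, so some care is needed to pass to the minimal core and to control the transient part. The second is establishing, uniformly across the three structural types, that near such an $\omega$ proximal orbits are asymptotic --- in particular carrying the Denjoy and irrational-rotation analysis on the circular part of $G$ (where Kuchta's theorem for circle maps is the model) through the presence of branching points. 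A clean, uniform structure theorem for the infinite $\omega$-limit sets of zero-entropy graph maps is the linchpin, and is the step I expect to be hardest.
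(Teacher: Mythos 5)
There is a fatal gap in the zero-entropy half of your plan: the statement you set out to prove there --- that a graph map with $h(f)=0$ has no scrambled pair --- is false already for interval maps, which are graph maps. Sm\'\i tal \cite{Sm} constructed zero-entropy interval maps possessing Cantor $\delta$-scrambled sets (the paper points this out explicitly just after Theorem~\ref{theo:general-h>0}), so your dichotomy ``$h(f)>0\Rightarrow$ Cantor scrambled set; $h(f)=0\Rightarrow$ no scrambled pair'' cannot hold; indeed it would make Theorem~\ref{theo:scrambedpairG} equivalent to ``scrambled pair $\Rightarrow$ positive entropy'', which fails on the interval. Note also that the interval dichotomy (i)/(ii) you invoke as a model is not an entropy dichotomy: alternative (i) is compatible with zero entropy. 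The precise step that breaks is your claim that near an odometer/solenoidal infinite $\omega$-limit set proximal orbits are forced into a common eventual ``address'' and hence are asymptotic: the nested cycles of graphs pin an orbit down only up to the non-degenerate component it occupies at each level, and proximal-but-not-asymptotic pairs inside these components are exactly what occurs in Sm\'\i tal's zero-entropy chaotic examples. The rotation/Denjoy part of your argument is sound in spirit --- the paper proves precisely such a statement (Lemma~\ref{lem:almostconj2}) and uses it to exclude the circumferential case --- but it does not extend to solenoids.

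The paper's route shows what must replace your zero-entropy analysis: in the solenoid case one does not try to rule out scrambled pairs, but instead localizes the given pair inside a periodic component $I$ of a deep cycle of graphs chosen so that $I$ is an arc containing no branching point, with the pair eventually trapped (via an inner component $J$ fixed by $f^{k_{n+2}}$) so that $f^{k_{n+2}}|_I$ is an \emph{interval} map with a scrambled pair; Kuchta--Sm\'\i tal (Theorem~\ref{theo:scrambledpair-interval}) then yields the Cantor $\delta$-scrambled set, and this is exactly the theorem that already absorbs zero-entropy chaos. The basic-set case is where positive entropy and Theorem~\ref{theo:general-h>0} enter. As a minor further point, your positive-entropy half is more delicate than indicated (e.g.\ $f^N(J_i)\supseteq\widehat J$ does not follow from connectedness in a graph with loops, and the itinerary construction needs the full Sm\'\i tal-type argument); it can simply be replaced by citing Theorem~\ref{theo:general-h>0}, as the paper does.
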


By using easy arguments, one can extend this theorem to graphs which are not necessarily connected, i.e. to finite unions of connected graphs.

\smallskip

While the existence of a scrambled pair implies Li-Yorke chaos only in particular spaces, the following is a general result which holds on any compact metric space. By $h(f)$ we denote the topological entropy of $f$.

\begin{theorem}[Blanchard, Glasner, Kolyada, Maass~\cite{BGKM}]\label{theo:general-h>0}
Let $f\colon X\to X$ be a dynamical system. If $h(f)>0$, then $f$ has a $\delta$-scrambled Cantor set
for some $\delta>0$.
\end{theorem}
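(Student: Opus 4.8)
The plan is ergodic-theoretic: first pass from positive \emph{topological} entropy to a single ergodic invariant measure of positive Kolmogorov--Sinai entropy; then extract from its Pinsker factor the ``purely non-deterministic'' part of the dynamics, encoded in a relatively independent self-joining $\lambda$ on $X\times X$; then check that $\lambda$-almost every pair is $\delta$-scrambled for a suitable $\delta>0$; and finally harvest a $\delta$-scrambled Cantor set by a Kuratowski--Mycielski argument.

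To begin, by the variational principle there is an ergodic $f$-invariant Borel probability measure $\mu$ with $h_\mu(f)>0$; such a $\mu$ is non-atomic and is not carried by a periodic orbit, so $Y:=\mathrm{supp}(\mu)$ is a perfect compact set and $(Y,f)$ is topologically transitive. Since it suffices to find the Cantor set inside $Y$, I may assume $X=Y$. Let $\mathcal P$ be the Pinsker $\sigma$-algebra of $(X,\mu,f)$, let $(Z,\nu,g)$ be the associated zero-entropy factor, and let $\mu=\int_Z\mu_z\,d\nu(z)$ be the disintegration over $\mathcal P$; put
\[
\lambda:=\mu\times_{\mathcal P}\mu=\int_Z\mu_z\times\mu_z\,d\nu(z),
\]
an $(f\times f)$-invariant, coordinate-symmetric probability on $X\times X$. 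I would use two structural facts about $\lambda$: (a)~the extension $(X,\mu,f)\to(Z,\nu,g)$ is \emph{relatively Kolmogorov} --- any intermediate factor over which $X$ had zero relative entropy would be a zero-entropy factor strictly larger than $Z$, contradicting the maximality of $\mathcal P$ --- and is therefore relatively weakly mixing, so that $\lambda$ is \emph{ergodic} for $f\times f$; and (b)~because $h_\mu(f)>0$ the conditional measures $\mu_z$ are $\nu$-a.e.\ non-atomic, whence $\lambda(\Delta)=0$ (so $\lambda(D)>0$ for some $\delta>0$, where $D:=\{(x,y):d(x,y)\ge 2\delta\}$) while $\lambda$ charges every neighbourhood of the diagonal $\Delta$.

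A routine application of the Birkhoff ergodic theorem to $(X\times X,\lambda,f\times f)$ then shows that $\lambda$-a.e.\ pair visits $\{d<1/k\}$ infinitely often for every $k$ (since $\lambda(\{d<1/k\})>0$), hence satisfies $\liminf_n d(f^n(x),f^n(y))=0$; moreover the $(f\times f)$-invariant set $\{(x,y):\limsup_n d(f^n(x),f^n(y))\ge 2\delta\}$ contains $D$, so by ergodicity it has full $\lambda$-measure. Hence the symmetric $G_\delta$ set
\[
R:=\bigl\{(x,y):\ \liminf_n d(f^n(x),f^n(y))=0 \text{ and } \limsup_n d(f^n(x),f^n(y))\ge 2\delta\bigr\}
\]
satisfies $\lambda(R)=1$, is disjoint from $\Delta$, and consists of $\delta$-scrambled pairs of distinct points. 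To upgrade this to a Cantor set I would pass to the perfect compact space $M:=\mathrm{supp}(\lambda)$, on which $(M,f\times f)$ is transitive; running the recurrence argument above \emph{topologically} shows that the sets $\bigcup_{n\ge N}(f\times f)^{-n}\{d<1/k\}$ and $\bigcup_{n\ge N}(f\times f)^{-n}\{d>2\delta-1/m\}$ are dense open in $M$ for all $k,m,N$, so that $R\cap M$ is residual in $M$. A Kuratowski--Mycielski extraction --- working, when the Pinsker factor is finite, inside a single perfect invariant fibre on which the induced map is a $K$-system, and reducing the general case to this one --- then yields a Cantor set $C\subseteq X$ with $C\times C\setminus\Delta\subseteq R$, i.e.\ a $\delta$-scrambled Cantor set.

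The step I expect to be the genuine obstacle is the structural input (a)--(b): it is precisely here that positive entropy must be converted into honest mixing, and no soft topological argument will do --- one needs the relatively Kolmogorov structure of the Pinsker extension (equivalently, that for $\nu$-a.e.\ fibre two $\mu_z$-independent copies of the system approach the diagonal along a subsequence). A secondary, more technical difficulty is the final step: turning $\lambda(R)=1$ into an honest Cantor \emph{subset of $X$}. An alternative that sidesteps measures altogether is to extract from $h(f)>0$ an IE-pair $(x_0,x_1)$ possessing an \emph{infinite independence set} (in the sense of Kerr--Li), fix disjoint closed balls $A_0\ni x_0$, $A_1\ni x_1$ with $d(A_0,A_1)\ge 2\delta$ along which all $\{0,1\}$-itineraries are realized, and build $C$ directly by a nested Cantor scheme: split the construction tree along successive finite blocks of the independence set by sending different branches into $A_0$ versus $A_1$ (forcing $\limsup\ge 2\delta$ for every pair) and, on interleaved blocks, sending \emph{all} current branches into a common ball of radius tending to $0$ (forcing $\liminf=0$); the only delicate point there is the bookkeeping ensuring both requirements for every pair simultaneously.
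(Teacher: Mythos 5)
A point of context first: the paper does not prove Theorem~\ref{theo:general-h>0} at all --- it is quoted from \cite{BGKM}, with the remark that the proof of Theorem~2.3 there already yields the $\delta$-scrambled formulation. So your sketch is in effect a reconstruction of the cited argument, and it does follow the same ergodic-theoretic route as \cite{BGKM}: an ergodic $\mu$ with $h_\mu(f)>0$, the relatively independent self-joining $\lambda=\mu\times_{\mathcal P}\mu$ over the Pinsker factor, ergodicity of $\lambda$ together with non-atomicity of the fibres to show that $\lambda$-a.e.\ pair is proximal yet $2\delta$-separated along a subsequence, and a Mycielski-type extraction. However, three places need repair. (i) The extraction step as written does not go through: Mycielski's theorem produces a Cantor set from a relation residual in $W\times W$ for some \emph{perfect compact $W\subseteq X$}, whereas you only have $R$ residual in $M=\mathrm{supp}(\lambda)\subseteq X\times X$, which is not a product square, and a closed square $W\times W$ contained in $M$ need not inherit residuality from $M$. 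The standard fix --- essentially what \cite{BGKM} do --- is fibre-wise: for $\nu$-a.e.\ $z$ the countably many open sets defining $R$ have full $\mu_z\times\mu_z$-measure, hence are dense in $\mathrm{supp}(\mu_z)\times\mathrm{supp}(\mu_z)$, and $\mathrm{supp}(\mu_z)$ is perfect because $\mu_z$ is non-atomic; one applies Mycielski inside a single such fibre support. Your aside about ``when the Pinsker factor is finite'' is not the relevant dichotomy and indicates this step was not pinned down.

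(ii) The claim that the invariant set $\{(x,y):\limsup_n d(f^nx,f^ny)\ge 2\delta\}$ contains $D=\{d\ge 2\delta\}$ is false (a pair can be $2\delta$-separated at time $0$ and asymptotic afterwards); what is needed --- and what you already used for the $\liminf$ half --- is Poincar\'e recurrence plus ergodicity: $\lambda$-a.e.\ pair visits $D$ infinitely often, which gives $\limsup\ge 2\delta$ a.e. Easily repaired, but as stated the step is wrong. (iii) The structural inputs (a)--(b) (the Pinsker extension is relatively K, hence $\lambda$ is ergodic; the fibres $\mu_z$ are non-atomic) are theorems about \emph{invertible} measure-preserving systems (Rokhlin--Sinai); for a general continuous $f$ one must pass to the natural extension, run the whole argument there, and then project the scrambled Cantor set back to $X$, checking that the projection is injective on it and preserves both proximality and (up to a constant) the $\limsup$ lower bound --- it does, because two points of the natural extension with the same zeroth coordinate are asymptotic, but this reduction is genuinely needed and is carried out in \cite{BGKM}, not cosmetic. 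Your alternative route via Kerr--Li IE-pairs with an infinite independence set is viable (and postdates \cite{BGKM}), but the ``bookkeeping'' you defer there is precisely the substance of that proof.
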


In~\cite{BGKM}, it is in fact proved only that positive topological entropy implies
the existence of a Cantor scrambled set. Although the result is not
stated in term of $\delta$-scrambled set, the proof of \cite[Theorem~2.3]{BGKM}
clearly implies Theorem~\ref{theo:general-h>0}.

Positive topological entropy is only a sufficient condition for the existence of a Cantor scrambled set, not a necessary one. Even on the interval there are zero entropy maps having Cantor $\delta$-scrambled sets~\cite{Sm}.

Theorem~\ref{theo:general-h>0} is a motivation for the question, what kind of a weaker assumption could be sufficient for the existence of an \emph{infinite} scrambled set.

Before giving an answer, we wish to bring the attention of the reader to the fact that
\begin{itemize}
\item for graph maps, positive topological entropy is equivalent with the existence of an infinite $\omega$-limit set containing a periodic point.
\end{itemize}
Indeed, if a graph map $f$ has an infinite $\omega$-limit set containing a periodic point, then it is a basic set in that classification of $\omega$-limit sets which we adopt in Section~\ref{S:prelim}, and so $f$ has positive entropy by Corollary~\ref{theo:basic-h>0}. Conversely, if a graph map $f: G\to G$ has positive entropy, then by~\cite[Theorem B]{LM} there are closed intervals $J, K \subseteq G$ with disjoint interiors, and $n\in \mathbb N$ such that $f^n(J)\cap f^n(K) \supseteq J\cup K$. Then, by~\cite[pp. 35--37]{BC}, $f$ has an infinite $\omega$-limit set containing a periodic point.

Contrary to the graph case, in general there is no relation between positive topological entropy and the existence of an infinite $\omega$-limit set containing a periodic point. On one hand, there exist positive entropy homeomorphisms which are minimal~\cite{R}, hence do not have any periodic points. On the other hand, the square and the topologist's sine curve admit zero entropy maps with an infinite $\omega$-limit set containing a fixed point~\cite{FPS},~\cite{GL}, though they have only scrambled pairs and no scrambled sets with three points.

In spite of the fact that in general there is no relation between positive topological entropy and the existence of an infinite $\omega$-limit set containing a periodic point, it turns out that the existence of some kinds of $\omega$-limit sets implies the existence of some kinds of scrambled sets. First observe the following trivial fact:
\begin{itemize}
\item if $f$ has an infinite $\omega$-limit set containing a periodic point, then there exist scrambled pairs in the system; however, it is possible that no scrambled set has cardinality larger than two.
\end{itemize}
By~\cite[Theorem 4.1]{HY2},
\begin{itemize}
\item if $X$ is an infinite compact metric space and $f\colon X\to X$  is transitive and has a periodic
point then there is an uncountable scrambled set for $f$.
\end{itemize}
Notice that an equivalent formulation of this result is:
\begin{itemize}
\item if $X$ is a compact metric space and a continuous map $f: X\to X$ has an infinite orbit-enclosing $\omega$-limit set containing a periodic point, then $f$ has an uncountable scrambled set.
\end{itemize}
This result and Theorem~\ref{theo:general-h>0} are independent. On one hand, positive entropy does not imply the existence of an infinite orbit-enclosing $\omega$-limit set containing a periodic point (again recall that there are positive entropy minimal homeomorphisms~\cite{R}) and, on the other hand, there exists a zero entropy transitive map with dense periodic points on the Cantor set~\cite{We}.

Our following result gives, again in terms of $\omega$-limit sets, a required sufficient condition for an infinite $\delta$-scrambled set.

\begin{proposition}\label{P:special omega}
Let $X$ be a compact metric space and $f:X\to X$ a continuous map. If $f$ has an infinite $\omega$-limit set containing a periodic point and containing also an isolated point (isolated in the relative topology of the $\omega$-limit set), then $f$ has an infinite $\delta$-scrambled set for some $\delta>0$.
\end{proposition}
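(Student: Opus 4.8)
The plan is to exploit the infinite $\omega$-limit set $\omega_f(x_0)=:\omega$ which, by hypothesis, contains a periodic point $p$ (of period, say, $k$) and contains a point $q$ that is isolated in the relative topology of $\omega$. First I would reduce to the case $k=1$ by replacing $f$ with $f^k$: a $\delta$-scrambled set for $f^k$ is automatically a $\delta'$-scrambled set for $f$ with $\delta'$ depending only on $\delta$ and the modulus of continuity of $f,\dots,f^{k-1}$, since $\liminf$ and $\limsup$ along the full trajectory are controlled by those along the $k$ arithmetic subprogressions. So from now on $p$ is a fixed point lying in $\omega$, and $\omega$ also contains the isolated point $q$; note $q\neq p$ is possible only after checking, but in any case since $\omega$ is infinite and $q$ is isolated in $\omega$ while $p$ need not be, we may assume $q\neq p$ (if the only isolated point were $p$ itself, pass to $f(q')$ for a suitable preimage; more simply, an infinite set has at most countably many isolated points but the argument only needs one distinct from $p$, which exists because the set of isolated points is infinite or $\omega$ is a countable set with a non-isolated point $p$). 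Put $\delta := d(p,q)/3 > 0$, and let $U$ be an open neighbourhood of $q$ in $X$ with $\overline{U}\cap\omega=\{q\}$ and $\diam(U) < \delta$, so that $U$ is disjoint from a neighbourhood of $p$.

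The heart of the construction is a Mycielski-type / Kakutani-tower argument producing infinitely many points whose trajectories all shadow the same recurrence pattern between $q$ and $p$ but at mutually incompatible ``speeds''. Since $q\in\omega=\omega_f(x_0)$, the trajectory of $x_0$ enters $U$ infinitely often; since $p\in\omega$ as well, it also returns arbitrarily close to $p$ infinitely often. Because $q$ is isolated in $\omega$, there is $\eta>0$ such that $\omega\setminus U$ lies outside the $\eta$-ball around $q$, which gives us genuine ``separation'' of the two behaviours along the trajectory of $x_0$: we get an increasing sequence of times at which $f^n(x_0)$ is very close to $q$, interspersed with long blocks during which $f^n(x_0)$ stays far from $q$ and visits a neighbourhood of $p$. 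Now I would take a point $z\in\omega$ with $z$ mapping (after one step, using continuity and $f(\omega)=\omega$) into $U$; pulling back along $f$ inside $\omega$ and using that $\omega$ is a closed invariant set in which $p$ is a fixed attractor-in-$\omega$ candidate, I build, for each infinite subset $A\subseteq\mathbb N$, a point $y_A$ whose trajectory is in $U$ exactly at a prescribed sparse set of times encoded by $A$ and is near $p$ elsewhere. Two such points $y_A, y_B$ with $A\neq B$ satisfy $\liminf d(f^n(y_A),f^n(y_B))=0$ (both are near $p$ along the common complement of $A\cup B$, which is infinite if we arrange the times to be sparse enough) and $\limsup \ge \delta$ (at a time in $A\setminus B$ one point is in $U$, within $\delta$ of $q$, while the other is within $\delta$ of $p$, and $d(p,q)=3\delta$). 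Choosing an infinite antichain's worth of such $A$'s — e.g. $A_m = \{n_1 < n_2 < \cdots\}$ with the $m$-th point of $A_m$ omitted, or simply $\{A_m\}_{m\in\mathbb N}$ pairwise ``frequently disagreeing'' — yields an infinite $\delta$-scrambled set $\{y_{A_m}: m\in\mathbb N\}$.

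The technical device that makes the pull-back construction work is a standard shadowing/pseudo-orbit lemma inside $\omega$: given the finite pattern ``be in $U$ at steps $i_1,\dots,i_\ell$, be near $p$ otherwise, for $0\le n\le N$'', one finds an actual point of $X$ realising it, by following the trajectory of $x_0$ to locate a long finite block with the desired visits to $U$ and to a neighbourhood of $p$, then taking a convergent subsequence of the corresponding ``shifted'' points as $N\to\infty$ — here compactness of $X$ is used, and continuity of $f$ converts approximate realisation over $[0,N]$ into exact realisation in the limit for the qualitative conditions ``$\in U$'' (open) and ``$\in$ ball around $p$''. The main obstacle, and the point requiring the most care, is ensuring that the times can be made simultaneously sparse enough that for any two of our index sets $A,B$ the complement $\mathbb N\setminus(A\cup B)$ is infinite and the trajectories genuinely synchronise near $p$ there (not merely near $\omega$) — this is exactly where we need $p$ to be a periodic (hence, after passing to $f^k$, fixed) point rather than an arbitrary point of $\omega$: near a fixed point one controls the trajectory for arbitrarily long stretches, whereas near a generic point of $\omega$ one cannot force a long sojourn. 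I would handle this by a diagonal/interval-of-return argument, choosing the $n$-th prescribed visit time to $U$ to lie so far beyond the $(n-1)$-st that between consecutive visits the trajectory has time to settle within $1/n$ of $p$, uniformly across the (finitely many, at stage $n$) points constructed so far.
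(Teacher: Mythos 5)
The heart of your argument---constructing, for each prescribed sparse set $A\subseteq\mathbb{N}$, a point $y_A$ whose trajectory is in $U$ exactly at the times in $A$ and near $p$ at all other times---rests on what you call ``a standard shadowing/pseudo-orbit lemma inside $\omega$''. No such lemma exists for an arbitrary continuous map of a compact metric space: pseudo-orbits need not be shadowed, and itineraries cannot be freely prescribed. The only orbits available are the ones actually present in the system, and their pattern of visits to $U$ and to a neighbourhood of $p$ is dictated by the dynamics, not by you; following the trajectory of $x_0$ only produces blocks whose gaps between $U$-visits are whatever they happen to be, and passing to a limit of shifted points does not preserve the open condition ``$\in U$'' in any case. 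The failure is not merely technical: this proposition is applied in the paper to compact \emph{countable} spaces (Corollary~\ref{C:cpct ctble}), where the whole space is countable, the entropy is zero, and points realizing prescribed itineraries simply do not exist for most choices of $A$. So the construction of the family $\{y_{A_m}\}$, which carries the entire proof, is unjustified and in general impossible.

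The paper's proof needs no such construction: after reducing to a fixed point $z$ (via $f^k$, as you do), and choosing $\delta$ with $B(z_0,4\delta)\cap\omega_f(x)=\{z_0\}$ for the isolated point $z_0$, the infinite $\delta$-scrambled set is simply $\Orb_f(x)$ itself (even $\{z\}\cup\Orb_f(x)$). Any two orbit points $x_1=f^m(x_2)$ and $x_2$ are proximal because the trajectory has arbitrarily long sojourns near the fixed point $z$; and since the trajectory is eventually contained in $B(z_0,\delta)\cup\bigl(X\setminus\overline{B}(z_0,3\delta)\bigr)$ while visiting both pieces infinitely often, there are infinitely many $j$ with $f^j(x_1)\in B(z_0,\delta)$ and $f^j(x_2)=f^{j-m}(x_1)\notin\overline{B}(z_0,3\delta)$, giving $\limsup_n d(f^n(x_1),f^n(x_2))\ge 2\delta$. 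A secondary slip in your write-up: the justification that the isolated point may be taken distinct from the periodic point is not correct as stated (an infinite $\omega$-limit set can perfectly well have a unique isolated point); what is true, and what the paper invokes (Sharkovskii), is that a periodic point contained in a strictly larger $\omega$-limit set is never isolated in that set.
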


In connection with this proposition and Theorem~\ref{theo:general-h>0} (cf. also the mentioned result from~\cite{HY2}), it would be interesting to find, if possible, a stronger assumption on $\omega$-limit sets than that in Proposition~\ref{P:special omega}, but \emph{weaker than positive topological entropy}, which would imply the existence of a Cantor scrambled set.

Using Proposition~\ref{P:special omega} one can prove the following corollary.

\begin{corollary}\label{C:cpct ctble}
Let $X$ be a compact \emph{countable} metric space and $f:X\to X$ a continuous map. If $f$ has a scrambled pair then
it has an infinite $\delta$-scrambled set for some $\delta>0$.
\end{corollary}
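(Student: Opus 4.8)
The plan is to deduce this from Proposition~\ref{P:special omega}: it suffices to produce an infinite $\omega$-limit set of $f$ containing a periodic point, since in a compact countable space the required isolated point comes for free. Indeed, two elementary facts about $X$ will do the work. (a) Every nonempty closed subset $A\subseteq X$ is compact and countable, hence --- being a nonempty complete metric space --- it cannot be perfect (nonempty perfect sets are uncountable), so it has a point isolated in its relative topology; in particular $X$ is scattered and every $\omega$-limit set has an isolated point. (b) Every minimal set $M$ of $f$ is a periodic orbit: by (a) $M$ has an isolated point $p$, and since $\overline{\Orb_f(f(p))}=M\ni p$ by minimality while $\{p\}$ is open in $M$, we get $p=f^k(p)$ for some $k\ge1$, whence $M=\Orb_f(p)$ by minimality again. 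As every $\omega$-limit set is nonempty, closed and $f$-invariant, it contains a minimal set (standard Zorn's lemma argument), hence by (b) it contains a periodic point.

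So it only remains to show that if $(x,y)$ is a $\delta$-scrambled pair, then $\omega_f(x)$ or $\omega_f(y)$ is infinite. I argue by contradiction, assuming both are finite. A finite $\omega$-limit set is a single periodic orbit which, by the classical itinerary argument (using $d(f^n(x),\omega_f(x))\to0$ together with uniform continuity of $f$), is approached with a well-defined phase: if $\omega_f(x)$ has period $p$ there is an enumeration $\{c_0,\dots,c_{p-1}\}$ of $\omega_f(x)$ with $f(c_i)=c_{i+1}$ (indices mod $p$) and an integer $a$ with $d\big(f^n(x),c_{(n+a)\bmod p}\big)\to0$; likewise there are $\{c'_0,\dots,c'_{q-1}\}$ with $f(c'_i)=c'_{i+1}$ and an integer $b$ with $d\big(f^n(y),c'_{(n+b)\bmod q}\big)\to0$. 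Choosing $n_k\to+\infty$ with $d(f^{n_k}(x),f^{n_k}(y))\to0$ (possible since $\liminf_{n}d(f^n(x),f^n(y))=0$), we get $d\big(c_{(n_k+a)\bmod p},c'_{(n_k+b)\bmod q}\big)\to0$; the left-hand side takes only finitely many values, so it is eventually $0$. Hence $\omega_f(x)$ and $\omega_f(y)$ have a common point, so they coincide (distinct periodic orbits are disjoint), with $p=q$, and after identifying the two enumerations one finds $a\equiv b\pmod p$. But then $d(f^n(x),f^n(y))\le d\big(f^n(x),c_{(n+a)\bmod p}\big)+d\big(c_{(n+b)\bmod p},f^n(y)\big)\to0$, contradicting $\limsup_{n}d(f^n(x),f^n(y))\ge\delta>0$.

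Thus, say, $\omega_f(x)$ is infinite; by the first paragraph it contains a periodic point, and by (a) it contains a point isolated in its relative topology. Proposition~\ref{P:special omega} then applies to $\omega_f(x)$ and yields an infinite $\delta'$-scrambled set for some $\delta'>0$, completing the proof. The only step requiring a little bookkeeping is the itinerary argument in the second paragraph, but it is entirely classical; conceptually the proof is soft, the content being that countable compacta are scattered and hence carry only periodic minimal sets, while the scrambled pair is used solely to force one of the two $\omega$-limit sets to be infinite.
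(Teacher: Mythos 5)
Your proof is correct and follows essentially the same route as the paper: reduce to Proposition~\ref{P:special omega} by noting that one of the two $\omega$-limit sets must be infinite, that a countable compact set has an isolated point, and that it contains a periodic point via a minimal set with an isolated point --- your step (b) is exactly the paper's Lemma~\ref{L:ppp}, reproved inline. The only difference is that you spell out the standard phase/itinerary argument showing a scrambled pair cannot have two finite $\omega$-limit sets, which the paper treats as immediate.
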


Systems on compact countable metric spaces are relatively simple (say, they always have zero entropy). However, recall that the first example of a completely scrambled compact system $(X,f)$, i.e. a system for which the whole space $X$ is a scrambled set, was found on a countable space~\cite{HY}.

\medskip

The paper is organized as follows. In Section~\ref{S:prelim} we study $\omega$-limit sets on graphs, which are the main tool in our proof of Theorem~\ref{theo:scrambedpairG} presented in Section~\ref{S:graphs}. In Section~\ref{S:infinite SS} we then prove Proposition~\ref{P:special omega} and Corollary~\ref{C:cpct ctble}.

\section{Preliminaries on $\omega$-limit sets on graphs}\label{S:prelim}

\subsection{Cycles of graphs and topological characterization of $\omega$-limit sets on graphs}\label{SS:cycles}
It is well known that a finite $\omega$-limit set is a periodic orbit.
In order to classify infinite $\omega$-limit sets, we need the notion of a cycle of graphs.

\begin{definition}\label{D:cycle}
Let $f\colon G\to G$ be a graph map.
A subgraph $K$ of $G$ is called \emph{periodic of period~$k$} or \emph{$k$-periodic} if $K, f(K), \dots, f^{k-1}(K)$ are pairwise disjoint and $f^k(K) = K$. If, instead of $f^k(K) = K$, it is known only that $f^k(K) \subseteq K$, the subgraph $K$ is called \emph{weakly periodic}. Then the set $\Orb (K)= \bigcup_{i=0}^{k-1} f^i(K)$ is called a \emph{cycle of graphs} if $K$ is periodic and a \emph{weak cycle of graphs} if $K$ is weakly periodic.
\end{definition}

\begin{remark}\label{rem:topo-omega}
To understand better what follows, it may be useful to recall the following fact (see~\cite{HM}). If $\omega$ is an $\omega$-limit set of a graph map, then $\omega$ is either a nonempty finite set (in fact, a periodic orbit), or an infinite closed nowhere dense set, or a finite union of non-degenerate subgraphs (which form a cycle of graphs). Conversely, whenever $\omega$ is a subset of a graph and is of one of the above three forms, then there exists a continuous selfmap of that graph such that $\omega$ is an $\omega$-limit set of it.
\end{remark}

\subsection{Cycles of graphs containing a given infinite $\omega$-limit set}\label{SS:infinite omega}

When classifying $\omega$-limit sets of one-dimensional systems,
the terminology and the definitions of the different classes vary
in the literature. Sharkovskii, when studying
interval maps, says that an $\omega$-limit set is
of genus $0$, or genus 1/first kind, or genus 2/second kind, if it
is finite, or infinite and containing no periodic point, or infinite and
containing a periodic point respectively. However, we will rather follow the terminology of \cite{B3},
see also \cite{B-book}. (Let us remark that the classification in \cite{HM}
is equivalent, although the equivalence is not straightforward and does not
seem to be explicitly proved in the literature. Let us also remark that
the classification in \cite{HM} concerns only maximal $\omega$-limit sets (with respect to the inclusion).
This makes sense because, for a graph map, any $\omega$-limit set is included in a maximal
$\omega$-limit set. This follows from the fact that the family of all $\omega$-limit sets of a graph map is closed with
respect to the Hausdorff metric in the hyperspace of all closed subsets of the graph, see~\cite{MS2}.)

We start with a lemma about cycles containing an $\omega$-limit set.

\begin{lemma}\label{lem:cycle}
Let $f: G \to G$ be a graph map and $\omega_f(x)$ be an infinite $\omega$-limit set.
\begin{enumerate}
\item
If $X$ is a weak $k$-cycle of graphs containing $\omega_f(x)$, there exists
a $k$-cycle of graphs $X'$ such that $\omega_f(x)\subseteq X'\subseteq X$.
\item
If $X,Y$ are two cycles of graphs containing  $\omega_f(x)$, there exists a cycle of graph
$Z$ such that $\omega_f(x)\subseteq Z\subseteq X\cap Y$. Moreover, the period
of $Z$ is greater than or equal to the maximum of the periods of $X$ and $Y$.
\item If $X$ is a $k$-cycle of graphs containing $\omega_f(x)$, then one
can write $X=\bigcup_{i=0}^{k-1}f^i(K)$, where $K$ is a
$k$-periodic graph, such
that, $\forall 0\le i<k$, $\omega_f(x)\cap f^i(K)=
\omega_{f^{k}}(f^i(x))$. Moreover, $\forall 0\le i<k$, the set
$\omega_f(x)\cap f^i(K)$ is infinite.
\end{enumerate}
\end{lemma}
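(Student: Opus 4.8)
The plan is to establish the three assertions in turn, using (i) for (iii) to be self-contained and then using both (i) and (iii) for (ii), which is where the real work lies.

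\emph{Part (i).} Write $X=\bigcup_{i=0}^{k-1}f^i(K)$ with $K$ weakly $k$-periodic. Since the pieces $f^i(K)$ are pairwise disjoint and compact, $\omega_f(x)=\bigsqcup_i\big(\omega_f(x)\cap f^i(K)\big)$, and $f$ maps $\omega_f(x)\cap f^i(K)$ onto $\omega_f(x)\cap f^{i+1\bmod k}(K)$; as $\omega_f(x)$ is infinite, it follows that each of these $k$ sets is infinite. From $f^k(K)\subseteq K$ we get $f^k(f^i(K))\subseteq f^i(K)$, so $\big(f^{nk}(f^i(K))\big)_{n\ge0}$ is a decreasing sequence of subcontinua of $f^i(K)$; set $M_i:=\bigcap_{n\ge0}f^{nk}(f^i(K))$. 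A decreasing intersection of subcontinua is a subcontinuum, and $M_i$ is non-degenerate because it contains $\omega_f(x)\cap f^i(K)$, which is infinite and, being $f^k$-invariant, lies in every $f^{nk}(f^i(K))$. Hence each $M_i$ is a subgraph. Since a continuous map commutes with decreasing intersections of compacta, $f(M_i)=M_{i+1\bmod k}$, so $f^k(M_0)=M_0$, and the $M_i$ are pairwise disjoint; thus $X':=\bigcup_{i=0}^{k-1}M_i$ is a $k$-cycle of graphs with $\omega_f(x)\subseteq X'\subseteq X$.

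\emph{Part (iii).} Use the identity $\omega_f(x)=\bigcup_{i=0}^{k-1}\omega_{f^k}(f^i(x))$. The key point is that $\omega_{f^k}(f^i(x))$ lies in a single piece of the $k$-cycle $X$: the pieces are clopen in $X$, each is $f^k$-invariant, and by uniform continuity, once the trajectory $\big(f^{nk+i}(x)\big)_n$, which converges to $X$, enters a small neighbourhood of one piece it stays there. Writing $P(i)$ for the piece containing $\omega_{f^k}(f^i(x))$, the relation $f\big(\omega_{f^k}(f^i(x))\big)=\omega_{f^k}(f^{i+1}(x))$ forces $f(P(i))=P(i+1)$, so the $P(i)$ run cyclically through all pieces; relabelling so that $K:=P(0)$ gives $f^i(K)=P(i)$ and hence $\omega_f(x)\cap f^i(K)=\omega_{f^k}(f^i(x))$. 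Finally, since $\omega_f(x)$ is infinite some $\omega_{f^k}(f^{i_0}(x))$ is infinite, and since $f$ maps $\omega_{f^k}(f^{i-1}(x))$ onto $\omega_{f^k}(f^i(x))$, running backwards around the cycle shows all of them are infinite.

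\emph{Part (ii), the main obstacle.} The plan is to produce a weak cycle of graphs inside $X\cap Y$ containing $\omega_f(x)$ and then invoke (i). Put $k=\mathrm{lcm}(k_X,k_Y)$. Using (iii) for $X$ and for $Y$, decompose $\omega_f(x)=\bigsqcup_{l=0}^{k-1}\Omega_l$ with $\Omega_l=\omega_{f^k}(f^l(x))$ infinite, and note that $\Omega_l$ lies in $R_l:=(X\text{-piece})\cap(Y\text{-piece})$, where the $R_l$ are pairwise disjoint, $f(R_l)\subseteq R_{l+1\bmod k}$ and $f^k(R_0)\subseteq R_0$. A separate purely topological step, using the finiteness of the branching set, shows that the intersection of two subgraphs of $G$ has only finitely many connected components, each an arc or a point. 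The heart of the matter is then to control how $\Omega_l$ sits among the finitely many components of $R_l$: $f^k$ induces a permutation of the components met by $\Omega_l$, and this permutation must be a single cycle, of some length $r$ independent of $l$, because an $\omega$-limit set cannot be written as a disjoint union of two nonempty closed invariant sets. Passing to the period $K:=kr$ one gets $\omega_f(x)=\bigsqcup_{l=0}^{K-1}\omega_{f^K}(f^l(x))$ with each summand infinite and contained in a single component $N_l$ of $R_{l\bmod k}$, the $N_l$ pairwise disjoint (indices differing by $k$ fall in consecutive components of the $r$-cycle), and $f(N_l)\subseteq N_{l+1\bmod K}$. Hence $\bigcup_{l=0}^{K-1}N_l\subseteq X\cap Y$ is a weak $K$-cycle of graphs containing $\omega_f(x)$; applying (i) yields a $K$-cycle of graphs $Z$ with $\omega_f(x)\subseteq Z\subseteq X\cap Y$, and since distinct pieces of $Z$ lie either in distinct sets among the pairwise disjoint $R_l$ or in distinct components of one $R_l$, the period of $Z$ equals $K=\mathrm{lcm}(k_X,k_Y)\,r\ge\max(k_X,k_Y)$. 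The step I expect to be most delicate is exactly this last one: pinning down the correct common period (the factor $r$) and checking that $\Omega_l$ fills precisely one $f^k$-cycle of components of $R_l$, which rests on the non-decomposability of $\omega$-limit sets together with the finiteness of the component set of $R_l$.
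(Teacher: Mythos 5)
Your parts (i) and (iii) are essentially the paper's own arguments: (i) is the decreasing intersection $\bigcap_{n\ge 0}f^{nk}(f^i(K))$, non-degenerate because it traps the infinite set $\omega_f(x)\cap f^i(K)$, and (iii) is the paper's neighbourhood--itinerary argument showing that each $\omega_{f^k}(f^i(x))$ is confined to one piece of the cycle. The genuine divergence is in (ii), where your route is correct but heavier than the paper's. The paper uses neither (iii) nor any lcm: it takes the finitely many connected components $Z_1,\dots,Z_n$ of $X\cap Y$ meeting $\omega_f(x)$, notes $f(Z_i)\subseteq Z_{\tau(i)}$, concludes from ``properties of $\omega$-limit sets'' (exactly the indecomposability you invoke: an $\omega$-limit set cannot be split into two disjoint nonempty closed sets one of which is invariant) that these components form a weak cycle, and applies (i); the period estimate is then immediate, since a cycle contained in a $k$-cycle has period a multiple of $k$. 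Your version --- (iii) applied to both cycles, $k=\mathrm{lcm}(k_X,k_Y)$, the sets $R_l$, and the $f^k$-permutation of their components --- reaches the same weak-cycle-plus-(i) endgame, and has the merit of making explicit the step the paper dismisses as ``follows easily''. Two small repairs: the parenthetical claim that each component of the intersection of two subgraphs is an arc or a point is false in general (a component may contain branching points), but you only use finiteness of the number of components, which is true (and which the paper also uses without proof); and the independence of $r$ from $l$ does not follow from indecomposability alone --- it needs the easy remark that $f$ maps $\Omega_l$ onto $\Omega_{l+1}$ and each component of $R_l$ into a single component of $R_{l+1}$, hence induces a surjection between the sets of components met, so these finite cardinalities are constant around the cycle; only then is the pairwise disjointness of your $N_l$ for indices congruent modulo $k$ guaranteed. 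With that remark inserted, your (ii) is complete.
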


\begin{proof}
(i) Suppose that $\omega_f(x)$ is a subset of $X=\Orb_f(K)$, where $K$ is
a weak $k$-periodic graph.
The set $K':=\bigcap_{n\ge 0}f^{nk}(K)$ is non-empty, compact and connected because
it is a decreasing intersection of non-empty compact connected sets, and
$f^k(K')=K'$. Moreover, $\omega_f(x)\subseteq \Orb_f(K')$ because
$f(\omega_f(x))=\omega_f(x)$. Since $\omega_f(x)$ is infinite, $K'$ is
non-degenerate, and thus $X':=\Orb_f(K')$ is a $k$-cycle of graphs.

(ii) Suppose that $\omega_f(x)\subseteq X\cap Y$, where $X,Y$ are cycles of
graphs. Let $Z_1,\ldots, Z_n$ denote the connected components of
$X\cap Y$ meeting $\omega_f(x)$ (they are finitely many because
$X\cap Y$ has finitely many connected components). For every $1\le i\le n$,
$f(Z_i)$ is included in a connected component of $X\cap Y$ and
meets $f(\omega_f(x))=\omega_f(x)$, and thus there exists
$\tau(i)\in \{1,\ldots, n\}$ such that $f(Z_i)\subseteq Z_{\tau(i)}$.
This implies that $Z_i$ is eventually
weakly periodic under $f$. Then it follows easily from the
properties of $\omega$-limit sets that $Z_1,\ldots, Z_n$ is actually
a weak cycle of graphs. By (i), there exists a cycle of graphs $Z$ such that
$\omega_f(x)\subseteq Z\subseteq Z_1\cup\cdots \cup Z_n\subseteq X\cap Y$.
The period of $Z$ is trivially greater than or equal to
the maximum of the periods of $X$ and $Y$.

(iii) Suppose that $J$ is a $k$-periodic graph such that $\omega_f(x)
\subseteq \Orb_f(J)=X$, and write $J_i=f^i(J)$ for all $0\le i<k$. At least one
of the sets $(\omega_f(x)\cap J_i)_{0\le i<k}$ is infinite, and so
$\omega_f(x)\cap J_i$ is infinite for every $0\le i<k$ because
$f(\omega_f(x)\cap J_i)=\omega_f(x)\cap J_{i+1\bmod k}$.
For all $0\le i<k$, choose pairwise disjoint neighborhoods $U_i$ of $J_i$
and let $V_i \subseteq U_i$ be a neighborhood of $J_i$ such that $\overline{V_i} \subseteq U_i$ and
$f(V_i)\subseteq U_{i+1\bmod k}$. Since $V:= \bigcup_{i=0}^{k-1} V_i$ is a
neighborhood of $\omega_f(x)$, there is $N$ such that $f^n(x)\in V$ for all $n\geq N$.
Choose $n_0\geq N$, a multiple of $k$, say $n_0=Nk$. Consider $j$ with $y:=f^{n_0}(x)\in V_j$.
Then $f^r(y)=f^{n_0+r}(x) \in V_{j+r\bmod k}$ for every nonnegative $r$. Hence
$\omega_{f^k}(f^i(y))\subseteq \overline{V_{j+i\bmod k}}\subseteq U_{j+i\bmod k}$ for all $0\le i<k$. This together with
$$
\Orb_f(J) \supseteq \omega_f(x) = \omega_f(y) = \bigcup_{i=0}^{k-1}\omega_{f^k}(f^i(y))
$$
gives that $\omega_{f^k}(f^i(y))\subseteq J_{j+i\bmod k}$ for all $0\le i<k$. Thus we have
$$
\omega_{f^k}(f^i(x)) = \omega_{f^k}(f^{n_0+i}(x)) = \omega_{f^k}(f^i(y)) \subseteq J_{j+i\bmod k}
$$
for all $0\le i<k$. Then for $K:=J_j$ we get $\omega_{f^k}(f^i(x)) \subseteq f^i(K)$ and so
$\omega_f(x)\cap f^i(K)= \omega_{f^{k}}(f^i(x))$ for all $0\le i<k$. This proves (iii).
\end{proof}

For an infinite $\omega$-limit set $\omega_f(x)$, let
\begin{equation}\label{Eq:cycles}
\CC(x):= \{X\mid X\subseteq G \text{ is a cycle of graphs and } \omega_f(x) \subseteq X \}.
\end{equation}
Since the whole graph $G$ is weakly $1$-periodic, Lemma~\ref{lem:cycle}(i)
implies that $\CC(x)$ is never empty.

The periods of the cycles in $\CC(x)$ are either unbounded or
bounded. We study these two possibilities in the next subsections.

\subsection{Solenoid $\omega$-limit sets}\label{SS:solenoid}

Next lemma describes the situation when the periods of the cycles in
$\CC(x)$ are unbounded.

\begin{lemma}\label{lem:solenoid}
Let $f$ be a graph map and let $\omega_f(x)$ be an infinite $\omega$-limit
set such that the periods of the cycles in $\CC(x)$ are not bounded. Then
there exists a sequence of cycles of graphs $(X_n)_{n\ge 1}$ with
strictly increasing periods $(k_n)_{n\ge 1}$ such that, for all $n\ge 1$,
$X_{n+1}\subseteq X_n$ and $\omega_f(x)\subseteq \bigcap_{n\ge 1} X_n$.
Moreover, for all $n\ge 1$,
$k_{n+1}$ is a multiple of $k_n$ and
every connected component of $X_n$ contains the same number (equal
to $k_{n+1}/k_n\ge 2$) of components of $X_{n+1}$.
Furthermore, $\omega_f(x)$ contains no periodic point.
\end{lemma}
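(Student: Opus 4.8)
The plan is to build the cycles $X_n$ recursively, using the unboundedness of periods together with Lemma~\ref{lem:cycle}(ii) to stay inside the previously constructed cycle. First I would set $X_1$ to be any cycle of graphs in $\CC(x)$ with period $k_1\ge 1$; such a cycle exists since $\CC(x)\ne\emptyset$. Inductively, having constructed $X_n$ with period $k_n$, I use the hypothesis that the periods of cycles in $\CC(x)$ are unbounded to pick some cycle of graphs $Y\in\CC(x)$ whose period exceeds $k_n$. Then by Lemma~\ref{lem:cycle}(ii) applied to $X_n$ and $Y$, there is a cycle of graphs $X_{n+1}$ with $\omega_f(x)\subseteq X_{n+1}\subseteq X_n\cap Y$ and whose period $k_{n+1}$ is at least the maximum of the periods of $X_n$ and $Y$, hence $k_{n+1}>k_n$; in particular $(k_n)$ is strictly increasing and $X_{n+1}\subseteq X_n$. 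Clearly $\omega_f(x)\subseteq\bigcap_{n\ge 1}X_n$.

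Next I would establish the divisibility and "nesting count" statements. Fix $n$ and let $K_{n+1}$ be a connected component (one of the periodic graphs generating $X_{n+1}$). Since $X_{n+1}\subseteq X_n$, $K_{n+1}$ lies in a single component $K_n$ of $X_n$. Applying $f^{k_{n+1}}$ to $K_{n+1}$ returns it to itself; since $f^{k_n}$ permutes the $k_n$ components of $X_n$ cyclically and $f^{k_{n+1}}$ fixes the component $K_{n+1}\subseteq K_n$, the integer $k_{n+1}$ must be a multiple of $k_n$. Write $m:=k_{n+1}/k_n$. The map $f^{k_n}$ sends each component of $X_{n+1}$ into a component of $X_n$, and because $f^{k_n}$ cyclically permutes the $k_n$ components of $X_n$ and the components of $X_{n+1}$ are cyclically permuted by $f$, the components of $X_{n+1}$ contained in a fixed component $K_n$ of $X_n$ are precisely the $f^{jk_n}(K_{n+1})$ for $0\le j<m$, and these are pairwise disjoint by periodicity of $K_{n+1}$; moreover this count $m$ is the same for every component of $X_n$ since $f$ acts transitively on the components of $X_n$ (carrying the components of $X_{n+1}$ inside one to those inside the next). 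Finally $m\ge 2$: if $m=1$ for some $n$ then $k_{n+1}=k_n$, contradicting strict monotonicity, so in fact $m=k_{n+1}/k_n\ge 2$ always, which also re-proves strict increase.

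For the last assertion, suppose $p\in\omega_f(x)$ is periodic, say of period $q$ under $f$. Then $p$ lies in $\bigcap_n X_n$, so for each $n$ the point $p$ lies in one component $K_n$ of $X_n$, and since $f$ permutes the $k_n$ components of $X_n$ cyclically while the orbit of $p$ has only $q$ points and is contained in $\Orb_f(K_n)$, necessarily $k_n$ divides $q$ (the orbit of $p$ meets each of the $k_n$ components). But $k_n\to\infty$, which is impossible for $k_n\le q$ for all $n$. Hence $\omega_f(x)$ contains no periodic point.

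The main obstacle is the bookkeeping in the second paragraph: carefully verifying that the number of components of $X_{n+1}$ inside each component of $X_n$ is well-defined (independent of the chosen component) and that it equals $k_{n+1}/k_n$, which requires tracking how the cyclic $f$-action on components of $X_n$ interacts with the finer cyclic action on components of $X_{n+1}$. The rest — existence of the nested sequence and absence of periodic points — follows routinely from Lemma~\ref{lem:cycle} and elementary divisibility arguments.
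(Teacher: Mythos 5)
Your proposal is correct and follows essentially the same route as the paper: build the nested sequence by repeatedly applying Lemma~\ref{lem:cycle}(ii), count the components of $X_{n+1}$ inside each component of $X_n$ via the images $f^{jk_n}(K_{n+1})$, and exclude periodic points because any periodic point of $X_n$ has period at least $k_n\to\infty$. Your only deviations are cosmetic — you choose at each step a cycle $Y$ of period exceeding $k_n$ so that no subsequence extraction is needed (the paper extracts one), and you spell out the divisibility argument the paper calls trivial; note in passing that it is $f$, not $f^{k_n}$, that permutes the components of $X_n$ cyclically.
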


\begin{proof}
By assumption, there exists a sequence $(Y_n)_{n\ge 0}$ of cycles of graphs
in $\CC(x)$, with strictly increasing periods $(l_n)_{n\ge 1}$.
We define inductively a sequence $(Y'_n)_{n\ge 1}$ as follows. Let
$Y'_1=Y_1$. If $Y'_n$ is already defined then, according to
Lemma~\ref{lem:cycle}(ii),
there exists a $l'_{n+1}$-cycle of graphs $Y'_{n+1}$ such that
$\omega_f(x)\subseteq Y'_{n+1}\subseteq Y'_n\cap Y_{n+1}$ and $l'_{n+1}\ge
l_{n+1}$. Choose a subsequence $(n_i)_{i\ge 1}$ such that
$\forall i\ge 1, l'_{n_{i+1}}> l'_{n_i}$, and set $X_i:=Y'_{n_i}$. Then
$(X_i)_{i\ge 1}$ is a sequence of cycles of graphs containing
$\omega_f(x)$, with strictly increasing periods and such that $\forall i\ge 1$,
$X_{i+1}\subseteq X_i$. The fact that $k_{i+1}$ is a multiple of $k_i$
follows trivially from the inclusion of the cycles. Fix $i$.
We can write $X_i=\Orb_f(K_i)$ and $X_{i+1}=\Orb_f(K_{i+1})$ with
$K_i, K_{i+1}$ periodic intervals such that $K_{i+1}\subseteq K_i$. Let
$p:=k_{i+1}/k_i$. Then, for a given $0\le j<k_i$, the sets
$(f^{nk_i+j}(K_{i+1}))_{0\le n<p}$ are pairwise disjoint and included in
$f^j(K_i)$. This shows that every $f^j(K_i)$ (which is a connected component
of $X_i$) contains $p$ components of $X_{i+1}$.
Finally, if $z$ is a $k$-periodic point in $X_n$, then $k\ge k_n$, and so
$\bigcap_{n\ge 1}X_n$ contains no periodic point.
\end{proof}

\begin{definition}
An infinite $\omega$-limit set $\omega_f(x)$ of a graph map is called a
\emph{solenoid} if the periods of cycles in $\CC(x)$ are not bounded.
\end{definition}

Notice that, for instance according to Remark~\ref{rem:topo-omega},
the solenoid $\omega_f(x)$ is necessarily nowhere dense.

\subsection{$\omega$-limit sets in a minimal cycle of graphs}

Next lemma states that there exists a minimal cycle containing
$\omega_f(x)$ when the periods of the cycles in $\CC(x)$ are bounded.

\begin{lemma}\label{lem:mincycle}
Let $f$ be a graph map and let $\omega_f(x)$ be an infinite
$\omega$-limit set such that the periods of the cycles in $\CC(x)$ are bounded.
There exists a cycle of graphs $X\in \CC(x)$ such that,
$\forall\, Y\in \CC(x)$, $X\subseteq Y$. The period of $X$ is maximal
among the periods of all cycles in  $\CC(x)$.
\end{lemma}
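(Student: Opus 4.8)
The goal is to prove Lemma~\ref{lem:mincycle}: when the periods of the cycles in $\CC(x)$ are bounded, there is a smallest element of $\CC(x)$, and its period is the maximal one. The plan is to use Lemma~\ref{lem:cycle}(ii) together with a finiteness argument to produce this minimal cycle as a finite intersection.

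First I would let $k$ be the maximum of the periods of the cycles in $\CC(x)$, which exists by the boundedness assumption, and I would fix a cycle $X_0 \in \CC(x)$ of period exactly $k$. By Lemma~\ref{lem:cycle}(ii), for any $Y \in \CC(x)$ there is a cycle of graphs $Z \in \CC(x)$ with $\omega_f(x) \subseteq Z \subseteq X_0 \cap Y$ whose period is at least $\max$ of the periods of $X_0$ and $Y$; since $k$ is already maximal, $Z$ has period exactly $k$. So it suffices to find a smallest cycle among those of period exactly $k$ that contain $\omega_f(x)$: call this subfamily $\CC_k(x)$; it is nonempty and every member of $\CC(x)$ contains a member of $\CC_k(x)$.

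Now the main point is a finiteness/descending-chain argument inside $\CC_k(x)$. Given $X, X' \in \CC_k(x)$, applying Lemma~\ref{lem:cycle}(ii) gives $Z \in \CC_k(x)$ with $Z \subseteq X \cap X'$, so $\CC_k(x)$ is downward directed. To get an actual minimum, I would argue that a strictly decreasing chain in $\CC_k(x)$ cannot be infinite. Write each $X \in \CC_k(x)$ as $\Orb_f(K)$ with $K$ a $k$-periodic subgraph; a strictly decreasing chain $X_1 \supsetneq X_2 \supsetneq \cdots$ yields $K_1 \supseteq K_2 \supseteq \cdots$ with the intersection $K_\infty := \bigcap K_n$ nondegenerate (since $\omega_f(x)\cap K_n$ is infinite by Lemma~\ref{lem:cycle}(iii), hence $K_\infty$ still meets the infinite closed set $\omega_f(x)$ in its $i$-th piece and is nondegenerate), so $\Orb_f(K_\infty)$ is again a $k$-cycle in $\CC_k(x)$ contained in all $X_n$. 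The obstruction to infinite strict descent is then a combinatorial bound: each $K_n$ is a subgraph of $G$, and a properly nested sequence of subgraphs of a fixed graph $G$ can drop a vertex or an edge only finitely often — more precisely, the number of edges and branching points can only decrease (weakly) along the chain, and once those stabilize the only remaining freedom is shrinking the free arcs, but a $k$-cycle must have nonempty interior in each component and one checks that strict nesting of cycles forces a decrease that cannot continue indefinitely when the period is pinned at the maximum $k$ (if it could, a limiting argument would produce a cycle of period $>k$, contradicting maximality of $k$). Taking the (finite) minimum of any maximal decreasing chain, and using directedness to see it is below every element of $\CC_k(x)$, gives the desired minimal cycle $X$; since every $Y \in \CC(x)$ contains some member of $\CC_k(x)$ hence contains $X$, we get $X \subseteq Y$ for all $Y \in \CC(x)$, and $X$ has period $k$, the maximal period.

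The step I expect to be the main obstacle is making the descending-chain / finiteness argument rigorous: one must rule out an infinite strictly decreasing sequence of $k$-cycles containing $\omega_f(x)$. The clean way is probably to argue by contradiction — if $\CC_k(x)$ had no minimum, by directedness one could build a strictly decreasing sequence $X_1 \supsetneq X_2 \supsetneq \cdots$, take $K_\infty = \bigcap_n K_n$ as above to get a $k$-cycle $X_\infty = \Orb_f(K_\infty) \in \CC_k(x)$ with $X_\infty \subseteq X_n$ for all $n$; but then $X_\infty$ is either a new strictly smaller element (contradicting that the $X_n$ were cofinal in a chain with no lower bound) or one shows directly the chain must have been finite. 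Alternatively, invoke that the family of $\omega$-limit sets (and, by a similar token, of cycles containing a fixed one) behaves well under Hausdorff limits, as recalled in Subsection~\ref{SS:infinite omega}, so a decreasing chain of cycles has a Hausdorff limit which is again a cycle of period $\ge k$, hence $=k$, and is contained in all of them; then Zorn's lemma on the poset $(\CC_k(x), \supseteq)$ yields a maximal element of $(\CC_k(x),\supseteq)$, i.e. a minimal cycle, and directedness upgrades ``minimal'' to ``minimum''.
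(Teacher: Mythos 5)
Your primary argument has a genuine error: the claim that a strictly decreasing chain in $\CC_k(x)$ must be finite is false, and no combinatorial bound on edges/branching points can save it, because the free arcs of the cycles can shrink through a continuum of stages without the period ever changing. Concretely, let $G=[0,1]$, let $f$ be the identity on $[0,1/2]$ and a transitive map of $[1/2,1]$ onto itself fixing $1/2$, and let $x$ be a transitive point of the right half, so $\omega_f(x)=[1/2,1]$ is infinite and the maximal period is $k=1$; then $f([t,1])=[t,1]$ for every $t\in[0,1/2]$, so $\{[t,1]\mid t\in[0,1/2]\}$ is an uncountable strictly decreasing chain in $\CC_k(x)$. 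Likewise the parenthetical ``a limiting argument would produce a cycle of period $>k$'' has no basis: shrinking a $k$-cycle containing $\omega_f(x)$ never raises its period, so maximality of $k$ puts no obstruction on infinite descent. Hence finiteness of descent cannot be the engine of the proof.

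Your fallback route, however, is exactly the paper's proof, and it is the one to write out: with $k$ the maximal period and $\CC_k$ the set of $k$-cycles in $\CC(x)$, one checks that an \emph{arbitrary} totally ordered family in $\CC_k$ (not just a sequence, so intersect the whole chain rather than pass to Hausdorff limits of a sequence; alternatively reduce to countable subchains by second countability) has a lower bound, namely its intersection $Y$: it is compact, satisfies $f(Y)=Y$, contains $\omega_f(x)$, has exactly $k$ connected components, and each component is nondegenerate because it meets $\omega_f(x)$ in an infinite set by Lemma~\ref{lem:cycle}(iii), so $Y\in\CC_k$. Zorn's lemma then gives a minimal element $X$ of $\CC_k$, and for any $Y\in\CC(x)$, Lemma~\ref{lem:cycle}(ii) yields $Z\in\CC(x)$ with $Z\subseteq X\cap Y$ whose period is at least that of $X$ and at most $k$, hence $Z\in\CC_k$, hence $Z=X$ by minimality and $X\subseteq Y$; this simultaneously upgrades ``minimal'' to ``minimum'' and shows the period of $X$ is the maximal one. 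Deleting the finite-descent paragraph and keeping this Zorn-plus-directedness argument, with the intersection-of-chains verification made explicit, gives a proof that coincides with the paper's.
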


\begin{proof}
Let $k$ denote the maximal period of the cycles in $\CC(x)$, and define
$$
\CC_k:= \{X\in \CC(x) \mid X \text{ is of period }k\}.
$$
Let $(Y_{\lambda})_{\lambda\in \Lambda}$ be a totally ordered
family in $\CC_k$ (that is, all elements in $\Lambda$ are comparable
and, if $\lambda\le \lambda'$, then $Y_{\lambda}\subseteq Y_{\lambda'}$).
Then $Y=\bigcap_{\lambda\in\Lambda}Y_{\lambda}$ is compact and has
$k$ connected components because this is a decreasing intersection of
$k$-cycles, and
$f(Y)=Y$. Moreover, $\omega_f(x)\subseteq Y$. Hence $Y\in\CC_k$.
Thus Zorn's Lemma applies, and there exists an element $X\in \CC_k$
such that,
\begin{equation}\label{eq:mincycle}
\forall X'\in\CC_k,\ X'\subseteq X\Rightarrow X'=X.
\end{equation}
Let $Y\in\CC(x)$. By Lemma~\ref{lem:cycle}(ii), there exists
$Z\in\CC(x)$ such that $Z\subseteq X\cap Y$, and the period
of $Z$ is greater than or equal to the period of $X$. On the other hand,
the period of $Z$ is at most $k$ by definition. Hence $Z\in\CC_k$.
Then $Z=X$ by \eqref{eq:mincycle}, i.e., $X\subseteq Y$.
\end{proof}

\smallskip

If $X$ is any finite union of subgraphs of $G$ such that $f(X)\subseteq X$, we define
\begin{equation}\label{eq:E(X,f)-1}
E(X,f)=\{y\in X\mid \forall\, U\text{ neighbourhood of $y$ in }X,\,
\overline{\Orb_f(U)}=X\}.
\end{equation}
This set is obviously closed and it is easily seen to be $f$-invariant, i.e., $f(E(X,f)) \subseteq E(X,f)$.
In general $f(E(X,f)) \neq E(X,f)$. For instance, let $G$ be a circle and $X=G$. The circle $G$ is the union
of ``western half-circle" and ``eastern half-circle". Let $f$ restricted to any of these half-circles be
topologically conjugate to the tent map, the ``south pole" of $G$ being a fixed point of $f$ and the
``north pole" being mapped to the ``south pole". Then $E(X,f)$ consists of the two ``poles" but $f(E(X,f))$
is a singleton containing just the ``south pole".

Notice also that $E(X,f)$ was defined without referring to any $\omega$-limit set
and that
\begin{equation}\label{eq:E(X,f)-2}
\text{if \, $E(X,f)\neq\emptyset$, \, then \, $f(X)=X$}.
\end{equation}

\begin{lemma}\label{lem:prol}
Let $f\colon G\to G$ be a graph map and
let $\omega_f(x)$ be an infinite $\omega$-limit set
such that the periods of cycles of graphs
containing $\omega_f(x)$ are bounded, and let $K$ be the minimal cycle of
graphs containing $\omega_f(x)$.
\begin{enumerate}
\item
For every $y\in \omega_f(x)$ and for every
relative neighborhood $U$ of $y$ in $K$, $\overline{\Orb_f(U)} = K$.
\item
$\omega_f(x)\subseteq E(K,f)$. In particular, $E(K,f)$ is infinite.
\end{enumerate}
\end{lemma}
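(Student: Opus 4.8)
The plan is to prove the two assertions in the order stated, deriving (ii) as an easy consequence of (i), so the heart of the matter is (i): for every $y\in\omega_f(x)$ and every relative neighbourhood $U$ of $y$ in $K$, one has $\overline{\Orb_f(U)}=K$. First I would reduce to the case $k=1$ where $k$ is the period of $K$. Writing $K=\bigcup_{i=0}^{k-1}f^i(L)$ with $L$ a $k$-periodic graph, Lemma~\ref{lem:cycle}(iii) gives us that each $\omega_f(x)\cap f^i(L)=\omega_{f^k}(f^i(x))$ is an infinite $\omega$-limit set of $f^k$ on $f^i(L)$, and the minimality of $K$ among cycles of $f$ containing $\omega_f(x)$ should translate into minimality of $f^i(L)$ among $f^k$-subgraphs (not merely cycles) containing $\omega_{f^k}(f^i(x))$; if $\omega_{f^k}(f^i(x))$ were contained in a proper weakly $f^k$-invariant subgraph of $f^i(L)$, applying $\bigcup_{j=0}^{k-1}f^j(\cdot)$ and then Lemma~\ref{lem:cycle}(i) would produce a cycle of $f$ strictly inside $K$ still containing $\omega_f(x)$, contradicting minimality. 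After this reduction, $K=L$ is a single $f$-invariant graph, $f(K)=K$, and $\omega_f(x)$ is an infinite $\omega$-limit set of $f\colon K\to K$ contained in no proper weakly invariant subgraph of $K$.

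The core argument for $k=1$ is then a connectedness/compactness argument on the graph $K$. Fix $y\in\omega_f(x)$ and a relative neighbourhood $U$ of $y$ in $K$; let $W:=\overline{\Orb_f(U)}$. Since $y\in\omega_f(x)$ and $f(\omega_f(x))=\omega_f(x)$, the set $\Orb_f(U)$ meets $\omega_f(x)$ in a dense-orbit way: more precisely, for any $z\in\omega_f(x)$ the trajectory of $x$ returns near $z$, and between such returns it must pass near $y$, so $z\in\overline{\Orb_f(f^m(U))}$ for suitable $m$; I expect to show $\omega_f(x)\subseteq W$ and in fact $\overline{\Orb_f(z)}\subseteq W$ for every $z\in\omega_f(x)$. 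The key point is that $W$ is a closed subset of the graph $K$ with $f(W)\subseteq W$ (it is the closure of a forward-invariant set, hence weakly invariant once we check $\overline{\Orb_f(f(U))}=W$, which holds because $\Orb_f(U)=U\cup\Orb_f(f(U))$ and $U$ has nonempty interior meeting the recurrent point $y$). If $W\ne K$, then since $W$ is closed and $K$ is a connected graph, $W$ is a proper compact subset whose orbit stays inside $W$; one then shrinks $W$ to a subgraph — take the connected components of $W$ that meet $\omega_f(x)$, note as in the proof of Lemma~\ref{lem:cycle}(ii) that they are permuted up to inclusion by $f$, hence form a weak cycle of graphs, and by Lemma~\ref{lem:cycle}(i) there is a genuine cycle of graphs $K'\subseteq W\subsetneq K$ containing $\omega_f(x)$. (That $K'$ is non-degenerate uses that $\omega_f(x)$ is infinite.) This contradicts the minimality of $K$ in $\CC(x)$, and so $W=K$, proving (i).

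For (ii): given (i), fix any $y\in\omega_f(x)$ and any relative neighbourhood $U$ of $y$ in $K$; (i) says $\overline{\Orb_f(U)}=K$, which is precisely the defining condition for $y\in E(K,f)$ in \eqref{eq:E(X,f)-1}. Hence $\omega_f(x)\subseteq E(K,f)$, and since $\omega_f(x)$ is infinite, so is $E(K,f)$. The main obstacle I anticipate is the bookkeeping in the reduction from period $k$ to period $1$ — in particular verifying that "minimal cycle of $f$" really forces "minimal weakly invariant subgraph for $f^k$" on each component $f^i(L)$, since one must rule out proper weakly invariant subgraphs that are not themselves cycles, and this is where Lemma~\ref{lem:cycle}(i) and the $f$-invariance of $\omega_f(x)$ do the work. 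The other delicate point is confirming that the set $W=\overline{\Orb_f(U)}$ is genuinely weakly invariant (so that the Lemma~\ref{lem:cycle} machinery applies to it); this should follow from $U$ having nonempty relative interior in $K$ together with $y$ being a point of $\omega_f(x)$, but it is the step most easily glossed over.
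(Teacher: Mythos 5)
Your overall strategy --- set $W:=\overline{\Orb_f(U)}$, show $\omega_f(x)\subseteq W$, extract from $W$ a cycle of graphs containing $\omega_f(x)$, and invoke minimality of $K$ --- is the same as the paper's, and part (ii) is handled identically. But two concrete steps are glossed over, and they are exactly where the work of the proof lies. First, $U$ is only a neighbourhood of $y$ \emph{relative to $K$}, so the fact that the trajectory of $x$ accumulates at $y$ does not by itself put any iterate $f^m(x)$ into $U$; your phrase ``between such returns it must pass near $y$'' silently assumes it does. The paper first shows that the trajectory of $x$ eventually lies in $K$: since $K$ is a finite union of subgraphs, $\partial K$ is finite, so the infinite set $\omega_f(x)$ meets $\Int(K)$, whence $f^N(x)\in\Int(K)$ for some $N$ and $\Orb_f(f^N(x))\subseteq K$ because $f(K)=K$. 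Only after this does ``close to $y$'' imply ``in $U$'', giving some $f^m(x)\in U$ and hence $\omega_f(x)=\omega_f(f^m(x))\subseteq\overline{\Orb_f(f^m(x))}\subseteq W$. Without that step your claim $\omega_f(x)\subseteq W$ is unjustified.

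Second, to feed $W$ into Lemma~\ref{lem:cycle}(i) you need the relevant part of $W$ to be a \emph{finite} union of non-degenerate subgraphs forming a weak cycle; a closed $f$-invariant subset of a graph may have infinitely many, possibly degenerate, components, so ``argue as in Lemma~\ref{lem:cycle}(ii)'' does not transfer (there, finiteness of the components was automatic because $X\cap Y$ is an intersection of two finite unions of subgraphs). The paper obtains finiteness from recurrence: two visits $f^i(x),f^{i+n}(x)\in U$ (again using that the trajectory is eventually in $K$) give $f^n(U)\cap U\neq\emptyset$, which chains the iterates of $U$ and shows that $\overline{\Orb_f(U)}$ has at most $n$ components, so it is a weak cycle of graphs and Lemma~\ref{lem:cycle}(i) applies; minimality of $K$ then forces $\overline{\Orb_f(U)}=K$. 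By contrast, the point you flag as delicate --- weak invariance of $W$ --- is automatic, since $f(\overline{A})\subseteq\overline{f(A)}$ and $f(\Orb_f(U))\subseteq\Orb_f(U)$; and your preliminary reduction to period $k=1$, while repairable, is unnecessary bookkeeping: the paper's argument (and your own core argument) works for arbitrary period directly.
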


\begin{proof}
Since $K$ is the union of finitely many subgraphs, the set $\partial K$ is
finite. Hence $\omega_f(x)\cap \Int(K)\neq\emptyset$. This implies
that there exists $N$ such that $f^N(x)\in\Int(K)$, and thus
$\Orb_f(f^N(x))\subseteq K$. Then $\omega_f(x)=\omega_f(f^N(x))$ is an
$\omega$-limit set for the restricted map $f|_K$.
Let $y\in \omega_f(x)$ and $U$ be a relative neighborhood of $y$ in $K$.
Then, by considering the map $f|_K$, we get that there exist
$i\ge N$ and $n\ge 1$ such that $f^i(x), f^{i+n}(x)\in U$, which implies
that $f^n(U)\cap U\neq\emptyset$. It follows that the set $X:=\overline{\Orb_f(U)}$
has at most $n$ connected components, and is a weak cycle
of graphs. Moreover, $X\supseteq \omega_f(x)$. Then one can find a cycle of
graphs which contains $\omega_f(x)$ and is included in $X$ by
Lemma~\ref{lem:cycle}(i).
By minimality of $K$, this cycle is equal to $K$. Hence $X=K$, which is (i).
Then (ii) trivially follows from (i).
\end{proof}

Notice that the definition of $E(X,f)$ is unfortunately missing in \cite{B1}.
In Section~2 of \cite{B3}, the definition is given and the theorem is recalled
(the series of papers \cite{B1, B2, B3} forms a whole).
Notice that in Blokh's papers, a ``graph'' (also called a one-dimensional
ramified manifold) is not assumed to be connected, and is actually a
finite union of graphs with the definition of a graph we use.

Theorem~\ref{theo:Blokh-conjugacyE} is \cite[Theorem 2]{B1} (stated in the
slightly restricted case of a graph). To state it, we need the notion of
almost conjugacy.

\begin{definition}
Let $f\colon X\to X$, $g\colon Y\to Y$ be two continuous maps. A continuous
map $\varphi\colon X\to Y$ is a \emph{semi-conjugacy} between $f$ and
$g$ if $\varphi$ is onto and $\varphi\circ f=g\circ \varphi$. If in
addition $\varphi$ is a homeomorphism, then it is a \emph{conjugacy}
between $f$ and $g$.

Assume further that $K\subseteq X$ is a closed set such that $f(K)\subseteq K$.
Then a semi-conjugacy $\varphi$ between $f$ and $g$ is an \emph{almost conjugacy}
between $f|_K$ and $g$ if
\begin{enumerate}
\item [(i)] $\varphi(K)=Y$,
\item [(ii)] $\forall y\in Y$, $\varphi^{-1}(y)$ is connected,
\item [(iii)] $\forall y\in Y$, $\varphi^{-1}(y)\cap K=\partial \varphi^{-1}(y)$,
where $\partial$ denotes the boundary in $X$,
\item  [(iv)] $\exists N\ge 1$ such that, $\forall y\in Y$, $\varphi^{-1}(y)\cap K$
has at most $N$ elements (and, by (i), at least one element).
\end{enumerate}
\end{definition}

Notice that, when $f$ and $g$ are graph maps, the last condition of this
definition  is implied by the other ones. Indeed,
if the set $\varphi^{-1}(y)$ is connected,  then it is either a singleton
or a subgraph of the graph $X$,
and thus the cardinality of $\partial \varphi^{-1}(y)$ is finite and uniformly bounded.

Recall that if an $\omega$-limit set $\omega_f(x)$ contains a point $y$ (hence contains also its orbit) such that $\omega_f(y) = \omega_f(x)$, we say that the $\omega$-limit set $\omega_f(x)$ is orbit-enclosing. Recall also that a set is called \emph{perfect} if it is closed and dense in itself.

\begin{theorem}[Blokh~\cite{B1}]\label{theo:Blokh-conjugacyE}
Let $f\colon G\to G$ be a graph map and $X\subseteq G$ a finite union of
subgraphs such that $f(X)\subseteq X$.
Suppose that $E=E(X,f)$ is infinite. Then $E$ is a perfect
set, $f|_E$ is transitive (i.e., $E$ is an orbit enclosing $\omega$-limit set)
and, $\forall z\in G$, if $\omega_{f}(z)\supseteq E$ then $\omega_{f}(z)=E$
(hence, $E$ is a maximal $\omega$-limit set).
Moreover, there exists a transitive map $g\colon Y\to Y$, where $Y$ is
a finite union of graphs, and a semi-conjugacy $\varphi\colon X\to Y$
between $f|_X$ and $g$ which almost conjugates $f|_E$ and $g$.
\end{theorem}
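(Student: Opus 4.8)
The statement is a (mildly specialised) version of Blokh's \cite[Theorem~2]{B1}, whose proof occupies the series \cite{B1,B2,B3}, so the plan is to deduce it from that result; here I only indicate how the hypotheses are matched and how Blokh's argument is organised. A connected topological graph is a one-dimensional ramified manifold in Blokh's sense, a finite union $X$ of subgraphs with $f(X)\subseteq X$ is an invariant ramified submanifold, and the set $E(X,f)$ defined in \eqref{eq:E(X,f)-1} is precisely the set Blokh attaches to the pair $(X,f)$. Moreover, by \eqref{eq:E(X,f)-2}, the assumption that $E:=E(X,f)$ is infinite already forces $f(X)=X$, so that $f|_X$ is onto, as is needed in \cite{B1}. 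It then suffices to invoke \cite[Theorem~2]{B1}.

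For orientation, the argument has four strands. The crux is the transitivity of $f|_E$, which amounts to $\omega_f(y)=E$ for every $y\in E$: since $E$ is closed and $f(E)\subseteq E$, one has $\Orb_f(y)\subseteq E$, hence $\omega_f(y)\subseteq E$, and the substance is the reverse inclusion. This is obtained by dualising the defining property of $E$ --- for $y\in E$ and a relative neighbourhood $U$ of $y$ in $X$ one has $\overline{\Orb_f(U)}=X$, so $\bigcup_{n\ge0}f^{-n}(W)$ meets $U$ for every nonempty relatively open $W\subseteq X$ --- and then running Blokh's first-return (inducing) construction on $X$, which uses that a graph has only finitely many branching points and that the subgraphs carrying the recurrence have no wandering intervals. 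Granting this, every $y\in E$ is a transitive point, so $f|_E$ is transitive and $E$ is an orbit-enclosing $\omega$-limit set. Perfectness of $E$ then follows at once: an isolated point $p\in E$ would be relatively open in $E$, a transitive point would be eventually mapped into $\{p\}$, and then $E=\overline{\Orb_f(p)}$ would be a finite periodic orbit, contradicting $E$ infinite. For maximality, if $\omega_f(z)\supseteq E$ and $w\in\omega_f(z)\setminus E$, choose a relative neighbourhood $U$ of $w$ in $X$ with $X':=\overline{\Orb_f(U)}\subsetneq X$ (possible since $w\notin E$); then $X'$ is closed with $f(X')\subseteq X'$, the orbit of $z$ eventually lies in $\Orb_f(U)$, so $E\subseteq\omega_f(z)\subseteq X'$, while no point of $E$ may lie in the interior of $X'$ relative to $X$ (such a point would have a relative neighbourhood inside $X'$, hence orbit closure inside $X'\subsetneq X$, contradicting membership in $E$); that $E$ must therefore sit in the topological frontier of the proper invariant set $X'$ is incompatible with $E$ being infinite, as Blokh's structure analysis shows. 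Hence $\omega_f(z)=E$.

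Finally, the almost conjugacy. If $E=X$ --- the case where $E$ is a cycle of non-degenerate subgraphs, cf.\ Remark~\ref{rem:topo-omega} --- take $Y=X$, $g=f$, $\varphi=\Id$, which is trivially an almost conjugacy. Otherwise one collapses to points the closures of the connected components of $X\setminus E$; each such closure is a subgraph whose frontier in $X$ is a finite subset of $E$, the quotient $Y$ is a finite union of graphs, $f$ descends to a continuous transitive map $g\colon Y\to Y$ (transitivity coming from $\varphi(E)=Y$ together with the transitivity of $f|_E$), and the quotient map $\varphi\colon X\to Y$ satisfies conditions (i)--(iv) of almost conjugacy by construction, the uniform bound in (iv) being controlled by the branching points of $G$. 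The one genuinely hard point is the inclusion $E\subseteq\omega_f(y)$ for $y\in E$, i.e.\ the transitivity of $f|_E$: it rests on Blokh's inducing machinery and the absence of wandering intervals, and is not reasonable to redo here, which is why in the body of the paper we simply cite \cite[Theorem~2]{B1}.
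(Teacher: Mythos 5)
Your proposal takes essentially the same route as the paper: the paper states this theorem as Blokh's result and simply cites \cite[Theorem~2]{B1} (the proof living in the series \cite{B1,B2,B3}), which is exactly what you do, your additional sketch of Blokh's argument being only orientation and not load-bearing. The one substantive point the paper adds beyond the citation is the remark that Blokh's maximality statement may a priori concern only $\omega$-limit sets of the restricted map $f|_X$, and it is upgraded to maximality for $f$ on all of $G$ by observing that $E$ infinite and $\partial X$ finite give $E\cap\Int(X)\neq\emptyset$, so the orbit of any $z\in G$ with $\omega_f(z)\supseteq E$ eventually enters $X$ --- a step your maximality sketch tacitly assumes when you let the orbit of $z$ meet a relative neighbourhood $U\subseteq X$.
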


\begin{remark}
In \cite[Theorem 2]{B1}, we may understand  that $E=E(X,f)$ is a maximal
$\omega$-limit set for the restricted map $f|_X$ (when considering $z$
such that $\omega_f(z)\supseteq E$, it is not stated whether $z$ belongs
to $G$ or $X$). But it is easy to show that if $E$ is a maximal
$\omega$-limit set for $f|_X$, then it is also a maximal
$\omega$-limit set for $f$. Indeed, suppose that $E$ is a maximal
$\omega$-limit set for $f|_X$ and let $z\in G$ be such that
$\omega_f(z)\supseteq E$. The set $E$ is infinite and $\partial X$ is
finite, and so $E\cap\Int(X)\neq\emptyset$, which implies that
$z':=f^n(z)\in X$ for some $n$. Then $\Orb_f(z')\subseteq X$ and, since
$\omega_f(z')=\omega_f(z)$, we get that $\omega_f(z)=E$.
\end{remark}

\begin{remark}\label{R:mincycle}
In Theorem~\ref{theo:Blokh-conjugacyE}, $X$ is in fact the minimal cycle of graphs containing the infinite $\omega$-limit set $E(X,f)$. To show this, first realize that, by~(\ref{eq:E(X,f)-1}) and~(\ref{eq:E(X,f)-2}), $X$ is indeed a cycle of graphs containing $E(X,f)$. Further, let $Y$ be a cycle of graphs containing $E(X,f)$. We claim that $X\subseteq Y$. Suppose this is not the case. Then, by Lemma~\ref{lem:cycle}(ii), there is a cycle of graphs $Z\subseteq X\cap Y \subsetneq X$ containing $E(X,f)$. Since $E(X,f) \subseteq Z \subsetneq X$ is infinite and $Z, X$ are cycles of graphs, there is a point $y\in E(X,f)$ such that some neighborhood $U$ of $y$ in $Z$ is also a neighborhood of $y$ in $X$. Then, by~(\ref{eq:E(X,f)-1}),  $\overline{\Orb_f(U)}=X$. However, $U\subseteq Z$ and so $\overline{\Orb_f(U)}\subseteq Z$. We get $X\subseteq Z$ which contradicts the fact that $Z\subsetneq X$.
\end{remark}

If $\omega_f(x)$ is infinite and included in a minimal cycle
of graphs $K$, then the set $E(K,f)$ contains $\omega_f(x)$ and so is infinite by Lemma~\ref{lem:prol}(ii).
Therefore, in such a case Theorem~\ref{theo:Blokh-conjugacyE} states that the
set $E(K,f)$ is a (maximal) $\omega$-limit set and also gives a partial
description of $f|_{E(K,f)}$.
This explains why the next definition is relevant in the classification
of $\omega$-limit sets.

\begin{definition}\label{D:basic}
Let $f\colon G\to G$ be a graph map and $X\subseteq G$ a finite union of
subgraphs such that $f(X)\subseteq X$. If $E(X,f)$ is infinite (hence, by
Theorem~\ref{theo:Blokh-conjugacyE}, it is an orbit enclosing $\omega$-limit
set of $f$ and it is a maximal $\omega$-limit set of $f$), it is called
a \emph{basic set} if $X$ contains a periodic point, and \emph{circumferential}
otherwise.
\end{definition}

Let us recall that in this definition, by Remark~\ref{R:mincycle}, $X$ is a minimal cycle of graphs containing $E(X,f)$.

Next result is due to Blokh \cite{B0} (see also \cite[Corollary 1]{B4}
for the statement without proof). A proof in English can be found in
\cite{ADR}.

\begin{theorem}[Blokh~\cite{B0}]\label{theo:transitive-h>0}
Let $Y$ be a finite union of graphs and
$g\colon Y\to Y$ be a transitive, continuous map.
If $g$ has periodic points, then it has positive topological entropy.
\end{theorem}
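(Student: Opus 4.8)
The plan is to deduce Theorem~\ref{theo:transitive-h>0} from the interval and circle case together with the structural results for transitive graph maps. First I would recall (or prove) that a transitive map $g$ on a finite union of graphs $Y$ with a periodic point actually has a periodic point of some period $p$, and that the first return map to a suitable subgraph reduces the problem to a connected graph: since $g$ is transitive, $Y$ has finitely many connected components which $g$ permutes cyclically, so by passing to a power $g^m$ (which multiplies entropy by $m$ but preserves the sign) we may assume $Y$ is connected. Thus the core case is: $g\colon Y\to Y$ transitive on a connected graph, with a periodic point; show $h(g)>0$.

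Next I would exploit the fixed/periodic point together with transitivity to produce a horseshoe. The key observation is that transitivity on a non-degenerate graph forces the non-wandering set to be everything, and in particular $g$ (or a power) has a fixed point $z$ together with points whose orbits come back near $z$ from ``different sides'' or through different edges at $z$. Concretely, pick a small arc or star-neighborhood $U$ of the periodic point; by transitivity there is a point $w\in U$ and an iterate $g^n$ with $g^n(w)\in U$ but such that the arc joining $w$ to the periodic point gets stretched by $g^n$ across itself — because $g$ is transitive, the image $g^n([\text{small arc}])$ must eventually cover a neighborhood of the periodic point, in fact cover the small arc at least twice once $n$ is large. This yields two disjoint closed subintervals $J_1,J_2$ of an edge with $g^n(J_i)\supseteq J_1\cup J_2$, i.e. a horseshoe for $g^n$, whence $h(g^n)\ge\log 2>0$ and so $h(g)>0$.

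The main obstacle I expect is turning ``transitive plus a periodic point'' into an honest horseshoe in a clean way, rather than hand-waving the covering/stretching argument above; the branching structure of the graph has to be handled, since the relevant arcs might pass through branching points. The cleanest route is probably to avoid re-deriving this and instead cite the known reduction: a transitive graph map either has dense periodic points (in which case one invokes the interval/circle horseshoe machinery after localizing to an edge, or uses the fact that a transitive graph map with dense periodic points and a branching point, or on the circle, has positive entropy) or its set of periodic points is nowhere dense, but transitivity plus \emph{one} periodic point already forces periodic points to be dense by a standard argument (the closure of the set of periodic points is a closed invariant set with nonempty interior, hence all of $Y$ by transitivity). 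So the real content is: a transitive graph map with dense periodic points has positive entropy.

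For that last statement I would argue as follows. If $Y$ contains a branching point $v$, then since periodic points are dense, pick a periodic point near $v$ and use transitivity to find iterates that route a small arc through at least three edges at $v$ and back onto itself, producing a $2$-horseshoe. If $Y$ is homeomorphic to a circle, this is exactly Kuchta's circle case, or more directly: a transitive circle map with a periodic point has a periodic point of period $\ge 2$ not acting as a rotation, and orientation-reversing or non-bijective behaviour forced by transitivity gives a horseshoe. If $Y$ is an arc, this is the classical interval result (transitive interval map has positive entropy). In all cases we obtain $g^n(J_1)\cap g^n(J_2)\supseteq J_1\cup J_2$ for disjoint closed arcs $J_1,J_2$ and some $n$, hence $h(g)\ge\frac1n\log 2>0$, as desired. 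I would present this as the skeleton and refer to \cite{ADR} for the detailed horseshoe construction handling the branching points.
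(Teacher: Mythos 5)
This statement is one the paper does not prove at all: it is quoted as Blokh's theorem, with the remark that an English proof is in \cite{ADR} (and a proof-free statement in \cite{B4}). So any comparison is with a citation, and the question is whether your sketch would stand on its own. It does not, for two reasons.

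First, the step you call a ``standard argument'' is wrong as stated. From transitivity plus \emph{one} periodic point you claim the closure of the set of periodic points is a closed invariant set \emph{with nonempty interior}, hence all of $Y$. Closedness and invariance are fine, but a single periodic orbit gives no interior whatsoever, and nothing in your argument supplies it; the implication ``transitive graph map with a periodic point has dense periodic points'' is precisely the nontrivial content of Blokh's work and of \cite{ADR}, not a consequence of the trivial observation you invoke. As written, the argument is circular: you assume the density you are trying to establish. Second, the heart of the matter --- extracting a horseshoe ($g^n(J_1)\cap g^n(J_2)\supseteq J_1\cup J_2$ for disjoint arcs) from transitivity plus (dense) periodic points on a graph with branching points --- is only described impressionistically (``route a small arc through at least three edges at $v$ and back onto itself''), and you then defer the actual construction to \cite{ADR}, which is exactly the reference that proves the theorem. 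So the proposal reduces to: reduce to the connected case (this part is fine), and then cite the known proof. That is consistent with how the paper itself treats the result, but it is not an independent proof, and the one genuinely new argument you offer (dense periodicity from one periodic point) fails. If you want a self-contained route, you would need either Blokh's spectral decomposition machinery or the covering/itinerary arguments of \cite{ADR} carried out in detail, including the case analysis at branching points and the circle case where a periodic point of period $\ge 1$ must be shown to force non-injectivity and then a horseshoe for some iterate.
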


\begin{corollary}\label{theo:basic-h>0}
If a graph map $f$ admits a basic $\omega$-limit set,
then $h(f)>0$.
\end{corollary}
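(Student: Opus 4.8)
The plan is to deduce Corollary~\ref{theo:basic-h>0} directly from Theorem~\ref{theo:Blokh-conjugacyE} and Theorem~\ref{theo:transitive-h>0}. Suppose $f\colon G\to G$ admits a basic $\omega$-limit set. By Definition~\ref{D:basic}, this means there is a finite union of subgraphs $X\subseteq G$ with $f(X)\subseteq X$, such that $E=E(X,f)$ is infinite and $X$ contains a periodic point $p$ of $f$.

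First I would invoke Theorem~\ref{theo:Blokh-conjugacyE}: since $E$ is infinite, there is a finite union of graphs $Y$, a transitive continuous map $g\colon Y\to Y$, and a semi-conjugacy $\varphi\colon X\to Y$ between $f|_X$ and $g$ which almost conjugates $f|_E$ and $g$. The key step is to observe that $g$ has a periodic point: since $p\in X$ is periodic for $f$, say $f^m(p)=p$, the point $\varphi(p)\in Y$ satisfies $g^m(\varphi(p))=\varphi(f^m(p))=\varphi(p)$ by the intertwining relation $\varphi\circ f=g\circ\varphi$, so $\varphi(p)$ is periodic for $g$. Now Theorem~\ref{theo:transitive-h>0} applies to the transitive map $g$ on the finite union of graphs $Y$ with a periodic point, giving $h(g)>0$.

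It remains to transfer positive entropy from $g$ back to $f$. Since $\varphi$ is onto and intertwines $f|_X$ with $g$, semi-conjugacies do not increase entropy, so $h(f|_X)\ge h(g)>0$; and since $X\subseteq G$ is invariant under $f$, we have $h(f)\ge h(f|_X)>0$. This completes the argument.

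I expect the only genuine subtlety to be the last step, namely that a factor map cannot have larger topological entropy than the original system --- this is a standard fact (a semi-conjugacy $\varphi$ with $\varphi\circ f = g\circ\varphi$ and $\varphi$ onto forces $h(g)\le h(f|_X)$), so no real obstacle arises. One should just be slightly careful that $X$ being a \emph{finite union} of graphs (rather than a connected graph) poses no problem: Theorem~\ref{theo:transitive-h>0} is already stated for finite unions of graphs, and entropy of the restriction to a closed invariant subset is well-defined and bounded above by $h(f)$ regardless of connectedness.
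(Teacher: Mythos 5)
Your proposal is correct and follows essentially the same route as the paper: apply Theorem~\ref{theo:Blokh-conjugacyE} to get the transitive factor $g$ on $Y$, push the periodic point of $f|_X$ forward via $\varphi$ to get a periodic point of $g$, invoke Theorem~\ref{theo:transitive-h>0} for $h(g)>0$, and conclude $h(f)\ge h(f|_X)\ge h(g)>0$ by the standard facts about factors and invariant subsets. The paper's proof is just a terser version of the same argument.
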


\begin{proof}
Let $E=E(X,f)$ be a basic set. According to Theorem~\ref{theo:Blokh-conjugacyE},
there exist a transitive map $g\colon Y\to Y$, where $Y$ is a finite union
of graphs, and a semi-conjugacy
$\varphi\colon X\to Y$ between $f|_X$ and $g$. The map $g$
has a periodic point because $X$ contains a  periodic point by definition. Then
$h(g)>0$ by Theorem~\ref{theo:transitive-h>0}, which implies that
$h(f)>0$.
\end{proof}

Now consider a circumferential set $E(X,f)$. Let $X_1,\ldots, X_k$ be the
connected components of $X$. Then $X$ is a cycle of graphs and, for every
$1\le i\le k$, $f^k|_{X_i}$ is ``almost'' an irrational rotation. More
precisely, either, for every $i$, $f^k|_{X_i}$ is conjugate to an irrational rotation
(and in this case $E(X_i,f^k)=X_i$), or, for every $i$,
there exists a semi-conjugacy $\varphi_i$ between $f^k|_{X_i}$ and an irrational
rotation which is an almost conjugacy on $f^k|_{E(X_i,f^k)}$, and
every connected component of $X_i\setminus E(X_i,f^k)$ is sent to one point
by $\varphi_i$. In the latter case the $\omega$-limit set $E(X,f)$ is called of \emph{Denjoy type}.
For precise statements and proofs, see \cite{B3} and \cite{MS}.
Notice that this situation cannot occur for interval or tree maps. We do  not discuss the circumferential
$\omega$-limit sets in more details, because we will only need the following result. It is due to Blokh~\cite{B0} (see \cite[Theorem S, p. 506]{B1} for a statement in English).

\begin{theorem}[Blokh~\cite{B0}]\label{theo:transitive-noperiodicpoint}
A transitive graph map with no periodic point is conjugate
to an irrational rotation of the circle.
\end{theorem}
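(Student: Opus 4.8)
The plan is to read the theorem off the $\omega$-limit set machinery developed above, specifically off Theorem~\ref{theo:Blokh-conjugacyE}, the basic/circumferential dichotomy (Definition~\ref{D:basic}), and the description of circumferential $\omega$-limit sets recalled above from \cite{B3} and \cite{MS}. So let $f\colon G\to G$ be transitive with no periodic point, and fix $x$ with dense orbit. Since a non-degenerate graph is compact and perfect, $\{f^n(x):n\ge N\}$ is dense for every $N$ (a dense subset of a perfect space stays dense after deletion of finitely many points); hence $f(G)=G$ and $\omega_f(x)=G$, so $G$ itself is an infinite $\omega$-limit set. A cycle of graphs that is contained in $G$ and contains $\omega_f(x)=G$ must equal $G$: such a cycle is a union of $k$ pairwise disjoint nonempty subgraphs whose union is the connected set $G$, which forces $k=1$ and the single piece to be $G$. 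Hence $\CC(x)=\{G\}$; in particular the periods of cycles in $\CC(x)$ are bounded, and the minimal cycle of graphs containing $\omega_f(x)$ is $K=G$. By Lemma~\ref{lem:prol}(ii), $\omega_f(x)\subseteq E(G,f)$, and since $E(G,f)\subseteq G$ by definition, $E(G,f)=G$. So Theorem~\ref{theo:Blokh-conjugacyE} applies with $X=G$ and $E=E(G,f)=G$.

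Now I would use the hypothesis that $f$ has no periodic point. Then $X=G$ contains no periodic point, so by Definition~\ref{D:basic} the infinite $\omega$-limit set $E(G,f)=G$ is \emph{circumferential}, not basic. (In particular $h(f)=0$: a basic set would give $h(f)>0$ by Corollary~\ref{theo:basic-h>0}; alternatively $h(f)>0$ would yield a horseshoe $f^n(J)\cap f^n(K)\supseteq J\cup K$ by \cite[Theorem~B]{LM}, and such a horseshoe contains a fixed point of $f^n$, i.e.\ a periodic point of $f$.) Applying the structure of circumferential sets: writing the cycle $X=G$ as the union of its connected components $X_1,\dots,X_k$, connectedness of $G$ gives $k=1$ and $X_1=G$, and either $f|_G$ is conjugate to an irrational rotation, or there is a semi-conjugacy $\varphi\colon G\to C$ onto an irrational rotation of a circle $C$ that is an almost conjugacy between $f|_{E(G,f)}=f$ and the rotation and collapses every connected component of $G\setminus E(G,f)$ to a point. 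In the first case we are done. In the second, $G\setminus E(G,f)=\emptyset$, so nothing is collapsed; moreover, by condition~(ii) of the almost-conjugacy definition each fibre $\varphi^{-1}(y)$ is connected, hence a singleton or a non-degenerate subgraph, while condition~(iii), applied with the distinguished closed set equal to $E(G,f)=G$, gives $\varphi^{-1}(y)=\varphi^{-1}(y)\cap G=\partial_G\varphi^{-1}(y)$, a finite set, which rules out the subgraph case. Thus $\varphi$ is a continuous bijection of the compact space $G$, hence a homeomorphism and a conjugacy. Either way $f$ is conjugate to an irrational rotation of the circle, and in particular $G$ is homeomorphic to the circle.

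The single substantial ingredient here is the structure theorem for circumferential $\omega$-limit sets --- that each $f^k|_{X_i}$ is conjugate to, or of Denjoy type over, an irrational rotation --- which is Blokh's deep result (\cite{B0,B3}, cf.\ \cite{MS}); the theorem being proved is essentially that result plus the elementary remark that transitivity leaves no wandering interval for the semi-conjugacy to collapse, and I expect proving (or invoking) this structure theorem to be the main obstacle. A more self-contained alternative would run: (a) $h(f)=0$, by the horseshoe/periodic-point argument above; (b) $f$ is a homeomorphism --- because a transitive graph map that is not a homeomorphism has positive entropy, which is again where the technical work concentrates; (c) a homeomorphism of $G$ permutes the finite set of branching points of $G$, so the absence of periodic points forces $G$ to have none, and a connected non-degenerate graph without branching points is an arc or a circle, the arc being excluded since arcs have the fixed point property; (d) for a circle homeomorphism with no periodic point, Poincar\'e theory yields an irrational rotation number and a semi-conjugacy onto the corresponding rotation, and transitivity excludes the Denjoy case (a wandering interval, or the complementary Cantor set, would prevent any orbit from being dense), so the semi-conjugacy is in fact a conjugacy.
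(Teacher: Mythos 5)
First, note that the paper does not prove this statement at all: Theorem~\ref{theo:transitive-noperiodicpoint} is imported verbatim from Blokh~\cite{B0} (with \cite[Theorem~S]{B1} cited for an English statement), precisely because the authors do not develop the structure theory of periodic-point-free transitive graph maps. Your proposal does not supply that missing content either. Your first route is correct as far as it goes --- the observations that transitivity forces $\omega_f(x)=G$, hence $\CC(x)=\{G\}$ and $E(G,f)=G$, and the fibre-is-a-singleton argument via conditions (ii)--(iii) of almost conjugacy are all fine --- but its engine is the ``either conjugate to an irrational rotation or of Denjoy type'' description of circumferential sets, which the paper only recalls informally with proofs deferred to \cite{B3} and \cite{MS}. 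That description is not available as a proved ingredient here, and in the literature it is established by (or together with) the classification of transitive graph maps without periodic points, i.e.\ essentially the theorem you are asked to prove; so invoking it is a reduction to an unproved result of at least the same depth, bordering on circularity. The same applies to step (b) of your ``self-contained alternative'': the claim that a transitive graph map which is not a homeomorphism has positive entropy is classically obtained by combining Theorem~\ref{theo:transitive-h>0} with Theorem~\ref{theo:transitive-noperiodicpoint} itself, and you offer no independent argument for it. You candidly flag both points as ``the main obstacle'', which is honest, but it means the proposal is a (correct) pair of reductions rather than a proof.

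Two smaller remarks. The parenthetical justification ``$h(f)=0$ because a basic set would give $h(f)>0$ by Corollary~\ref{theo:basic-h>0}'' is backwards: that corollary gives basic~$\Rightarrow$~positive entropy, not the converse; the valid route to $h(f)=0$ is the one you give second (a horseshoe from \cite[Theorem~B]{LM} yields a periodic point, via the covering argument of \cite[pp.~35--37]{BC} adapted to arcs in a graph --- this is exactly the chain the paper sketches in the introduction, and the ``fixed point of $f^n$ in the horseshoe'' step does need that adaptation, not just the interval IVT). On the positive side, steps (c) and (d) of your alternative route (a fixed-point-free homeomorphism permutes the finite set of branching points, so there are none; $G$ is then an arc or a circle, the arc is excluded by the fixed point property, and Poincar\'e--Denjoy theory plus transitivity gives conjugacy to an irrational rotation) are correct and would constitute a clean finish if an independent proof of (b) --- or of the circumferential structure theorem --- were actually provided.
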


\section{On graphs, one scrambled pair implies Cantor scrambled set}\label{S:graphs}

This section is first of all devoted to the proof of Theorem~\ref{theo:scrambedpairG},
which is the main result of this paper. We recall this theorem.

\medskip\noindent\textbf{Theorem~\ref{theo:scrambedpairG}.}
Let $f\colon G\to G$ be a graph map. If $f$ has a
scrambled pair, then it has a Cantor $\delta$-scrambled set for some
$\delta>0$.

\begin{lemma}\label{lem:almostconj2}
Let $f\colon G\to G$, $g\colon
G'\to G'$ be two graph maps and  $E\subseteq G$ a closed set such that
$f(E)\subseteq E$. Suppose that $\varphi\colon G\to G'$ is a
semi-conjugacy between $f$ and $g$, which is an almost conjugacy
between $f|_E$ and $g$.  If $g$ is an irrational rotation of the circle, then $f$
has no scrambled pair.
\end{lemma}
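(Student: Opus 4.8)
The plan is to argue by contradiction: suppose $(x,y)$ is a $\delta$-scrambled pair for $f$, and derive a contradiction from the structure of the almost conjugacy $\varphi$ onto the irrational rotation $g$ of a circle $C$. The first observation is that $g$ being an isometry (an irrational rotation) has no scrambled pairs whatsoever: for any $u,v\in C$, the distance $d(g^n(u),g^n(v))$ is constant in $n$, so $\liminf = \limsup$. Hence $(\varphi(x),\varphi(y))$ is not scrambled for $g$; combined with $\varphi\circ f = g\circ\varphi$, we get $\liminf_n d_{C}(g^n\varphi(x), g^n\varphi(y)) = \limsup_n d_C(g^n\varphi(x),g^n\varphi(y)) =: c$. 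Since $\liminf_n d(f^n(x),f^n(y)) = 0$ and $\varphi$ is (uniformly) continuous, we must have $c = 0$, i.e. $\varphi(x)=\varphi(y)$. So $x$ and $y$ lie in the same fiber $F := \varphi^{-1}(\varphi(x))$, which is connected by condition~(ii) of almost conjugacy.

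**Exploiting the fiber structure.** Now I would use the remaining conditions of almost conjugacy, in particular (iii) $F\cap E = \partial F$ and (iv) $\#(F\cap E)\le N$, together with the dynamics. First note that because $\varphi^{-1}(g^n\varphi(x))$ is the fiber over $g^n\varphi(x)$ and $f$ maps fibers into fibers, the whole forward orbit pair $(f^n(x),f^n(y))$ stays inside the single fiber $F_n := \varphi^{-1}(g^n(\varphi(x)))$ at time $n$. The key point is to control how far apart two points of a common fiber can be pushed. Each $F_n$ is either a point or a subgraph of $G$; the boundary points $\partial F_n = F_n\cap E$ are at most $N$, and since $g$ is minimal, the orbit of $\varphi(x)$ is dense in $C$, so infinitely many distinct fibers are visited. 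Because $G$ is a graph (finite total "length" in a suitable sense, or at least: the fibers $\{F_t : t\in C\}$ partition $G$ into connected pieces whose boundaries lie in $E$), only finitely many of these fibers can be non-degenerate with diameter $\ge \eps$, for each $\eps>0$ — otherwise $G$ would contain infinitely many disjoint subgraphs of size $\ge\eps$, impossible on a graph. Along the times $n$ when $F_n$ is tiny, $d(f^n(x),f^n(y))$ is tiny; I need to rule out that $\limsup$ stays $\ge\delta$.

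**The main obstacle and how to handle it.** The delicate step is that the $\limsup$ could still be realized along the (a priori infinite) set of times when $F_n$ is a non-degenerate subgraph — even though such "large" fibers are finitely many as sets, the orbit may return to them infinitely often. So I would track the relative position of $f^n(x), f^n(y)$ within the fiber. Here condition~(iii), $F_n\cap E = \partial F_n$, is what I would lean on: since $E$ is $f$-invariant and closed, and $\omega_f(x),\omega_f(y)$ need not lie in $E$, I would instead argue that the points $f^n(x)$ eventually enter, and stay near, the attracting set $E$ (using that $E=E(X,f)$ is the basic-type/circumferential $\omega$-limit set and absorbs orbits whose $\omega$-limit sets meet $\Int X$) — more cleanly, since $\varphi$ collapses each connected component of $X\setminus E$ to a point, two points of $G$ with the same $\varphi$-image that both lie in $\overline{\Orb_f(U)}$-type sets are forced, after finitely many steps, into a component of $X\setminus E$ or onto $E$ itself, on which the dynamics is conjugate (via $\varphi$) to the rotation restricted to a point — hence eventually distance-preserving. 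The upshot is that $\limsup_n d(f^n(x),f^n(y)) = \limsup_n d(f^n(x),f^n(y))$ equals the constant $c=0$ along a tail, contradicting $\ge\delta$. I expect the genuinely technical part to be showing that the pair is eventually trapped in a single fiber-component on which $f$ is injective and the fiber-components are carried to fiber-components isometrically-up-to-collapsing by $\varphi$; once that is set up, constancy of $d_C(g^n\varphi(x),g^n\varphi(y))$ forces $d(f^n(x),f^n(y))\to 0$, the desired contradiction. I would also record that this lemma is exactly what excludes the circumferential (Denjoy-type) and irrational-rotation cases from producing scrambled pairs, so that in the proof of Theorem~\ref{theo:scrambedpairG} a scrambled pair must come from a solenoid or a basic set.
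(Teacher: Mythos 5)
Your opening is right and matches the paper: since $g$ is an isometry with no scrambled pairs, $\liminf_n d(f^n(x),f^n(y))=0$ forces $\varphi(x)=\varphi(y)$ (via uniform continuity of $\varphi$ and the semi-conjugacy), so both orbits travel together through the fibers $F_n=\varphi^{-1}(g^n\varphi(x))$, which are connected, and in a graph only finitely many pairwise disjoint connected sets can have diameter $\ge\varepsilon$. But then you stop short of the one observation that closes the argument, and this is a genuine gap: the fibers $F_n$ are \emph{pairwise disjoint}, because if $F_m\cap F_n\neq\emptyset$ for some $m<n$ then $g^{n-m}(g^m\varphi(x))=g^n\varphi(x)=g^m\varphi(x)$, i.e.\ $g$ would have a periodic point, contradicting irrationality of the rotation. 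Hence no fiber is ever revisited, your worry that ``the orbit may return to the large fibers infinitely often'' cannot occur, and for every $\varepsilon>0$ one has $\diam(F_n)<\varepsilon$ for all but finitely many $n$ (after disposing of the trivial case where some $F_k$ is a single point, in which case the trajectories of $x$ and $y$ coincide from time $k$ on). Thus $d(f^n(x),f^n(y))\le\diam(F_n)\to 0$, so the pair is asymptotic and not scrambled; conditions (iii) and (iv) of almost conjugacy are not needed at all. This is exactly the paper's proof.

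Because you missed this, your ``main obstacle'' paragraph substitutes a hand-waving argument that does not hold up: the claims that orbits are absorbed into $E$, that the pair is ``eventually trapped in a single fiber-component on which $f$ is injective,'' and that fiber-components are carried ``isometrically-up-to-collapsing'' are neither proved nor available from the hypotheses of the lemma (note the lemma assumes only that $E$ is closed and $f$-invariant and that $\varphi$ is an almost conjugacy; no structure theory of $E(X,f)$ is given), and the conclusion that $\limsup_n d(f^n(x),f^n(y))$ ``equals the constant $c=0$ along a tail'' does not follow from anything stated. As written, the second half of your proposal is not a proof; the correct finish is the disjointness-of-fibers argument above.
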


\begin{proof}
Let $x,y$ be two points in $G$ such that
$$\liminf_{n\to+\infty}d(f^n(x),f^n(y))=0.$$
The semi-conjugacy implies that
$\liminf_{n\to+\infty}d(g^n(\varphi(x)),g^n(\varphi(y)))=0$.  This is
possible only if $\varphi(x)=\varphi(y)$ because $g$ is a rotation.
For every $n\ge 0$, we set $z_n:=\varphi(f^n(x))=\varphi(f^n(y))$ and
$G_n=\varphi^{-1}(z_n)$. Then $G_n$ is a closed connected set
containing both $f^n(x)$ and $f^n(y)$. If there exists $k$ such that
$G_k$ is reduced to a single point, then $f^k(x)=f^k(y)$ and the
trajectories of $x$ and $y$ eventually coincide. Otherwise, all the sets
$G_n$ are  subgraphs and so have non-empty interiors.  If there exist $m<n$
such that $G_m\cap G_n\neq\emptyset$, then
$z_m=z_n=g^{n-m}(z_m)$. This is impossible because $g$ has no periodic
point. Therefore, the subgraphs $(G_n)_{n\ge 0}$ are pairwise
disjoint. This implies that the diameter of $G_n$ tends to $0$ when
$n$ goes to $+\infty$. Hence
$$\lim_{n\to+\infty}d(f^n(x),f^n(y))=0,$$
and $f$ has no scrambled pair.
\end{proof}

\begin{proof}[Proof of Theorem~\ref{theo:scrambedpairG}]
Let $(x,y)$ be a scrambled pair. Then at least one of
the points $x,y$ has an infinite $\omega$-limit set (if both
$\omega$-limit sets were finite, they would be periodic orbits and  the pair
$(x,y)$ would not be scrambled).  Say, $\omega_f(x)$ is infinite.

Suppose first that $\omega_f(x)$ is a solenoid. By Lemma~\ref{lem:solenoid},
we have $\omega_f(x) \subseteq \bigcap_{n=1}^\infty X_n$ where $(X_n)_{n=1}^\infty$ is a
nested sequence of cycles of graphs whose periods $(k_n)_{n=1}^\infty$
form a strictly increasing sequence of positive integers.
One can choose $n$ such that $k_n\geq 2$ is larger than
the number of branching points of $G$ (which is finite), and thus some
connected component $I$ of $X_n$ is an arc which does not
contain any branching point of $G$. Then the arc $I$ contains at least
$4$ subarcs which are connected components of $X_{n+2}$. Hence there
is a component of $X_{n+2}$, call it $J$, which is a subset of the
interior of the arc $I$. Since $\omega_f(x)$ is infinite and
$f(\omega_f(x)) = \omega_f(x)$, it intersects each of the components
of $X_{n+2}$ in an infinite set by Lemma~\ref{lem:cycle}(iii).
Hence there is a point of
$\omega_f(x)$ which lies in the interior of $J$ and so, for some
$i_0$, $x':=f^{i_0}(x)$ also belongs to the interior of $J$.  Let
$g=f^{k_{n+2}}$. Then $g(J)=J$ and so, for all $j\ge 0$, $g^j(x')\in
J$. Let $y':=f^{i_0}(y)$. Since $(x,y)$ is a scrambled pair for $f$,
$(x',y')$ is a scrambled pair for $g$. In particular, there exists
$j_0$ such that $y'':=g^{j_0}(y')$ is so close to $x'':=
g^{j_0}(x')\in J$, that it belongs to $I$.  Then $g(I)=I$ and
$x'',y''\in I$ form a scrambled pair of the \emph{interval map}
$g|_I\colon I\to I$. By Theorem~\ref{theo:scrambledpair-interval},
$g|_I$ has a $\delta$-scrambled Cantor set for some $\delta>0$. This set
is $\delta$-scrambled also for $f$.

Suppose now that $\omega_f(x)$ is not a solenoid, and let $K$ be the
minimal cycle of graphs containing $\omega_f(x)$ given by Lemma~\ref{lem:mincycle}.
The set $E(K,f)$ is infinite by Lemma~\ref{lem:prol}.  If  $E(K,f)$ is a
basic set, then $h(f)>0$ by  Corollary~\ref{theo:basic-h>0}, and the
conclusion follows from  Theorem~\ref{theo:general-h>0}.
The proof of the theorem will be finished if we show that $E(K,f)$ is not circumferential.
Suppose on the contrary that  $K$ contains no periodic point. Let $K_1,\ldots, K_k$ be  the
connected components of $K$. It is clear that $E(K,f)=\bigcup_{i=1}^k
E(K_i,f^k)$. Since $\omega_f(x)$ is infinite, Lemmas~\ref{lem:cycle}(ii)
and \ref{lem:prol}(ii) imply  that each of the sets
$E(K_i,f^k)$ is infinite. Let $g:=f^k|_{K_1}$ and $E:=E(K_1,g)$.
According to Theorem~\ref{theo:Blokh-conjugacyE}, there exist a
transitive graph map $g'\colon G'\to G'$ and a semi-conjugacy
$\varphi$ between $g$ and $g'$ which is an almost conjugacy between
$g|_E$ and $g'$ (the set $G'$ is a graph because $G'=\varphi(K_1)$ and
$K_1$ is connected). Then $g'$ has no periodic point because $g$ has no
periodic point, and thus $g'$ is conjugate to an irrational
rotation by Theorem~\ref{theo:transitive-noperiodicpoint}.

Since $(x,y)$ is a scrambled pair, $\Omega:=\omega_f(x)\cap
\omega_f(y)\neq \emptyset$. Of course, $f(\Omega)\subseteq \Omega$ and
since $\omega_f(x) \subseteq K$, also $\Omega \subseteq K$.
Suppose that $\Omega \subseteq \partial K$. Then $\Omega$ is
finite and, being $f$-invariant, contains a periodic orbit of $f$,
which contradicts the assumption that $K$ contains no periodic point.
Therefore there exists $z\in \Omega \cap \Int(K)$. It follows that the
trajectories of $x$ and $y$ enter $K$. So, $f^n(x),f^n(y)\in K$ for
all sufficiently large $n$. Since $(x,y)$ is a scrambled pair and
the distances between the components of the cycle $K$ are positive,
there exists $N$ such that $x':=f^N(x)$ and $y':=f^N(y)$ belong
to $K_1$. But $(x',y')$ is then a scrambled pair for $g$, which
contradicts Lemma~\ref{lem:almostconj2}. This finishes the proof.
\end{proof}

\section{A special $\omega$-limit set implies an infinite scrambled set}\label{S:infinite SS}

In this section, we show Proposition~\ref{P:special omega} and
Corollary~\ref{C:cpct ctble}. For clarity, we restate these results before
their proof.

In a compact dynamical system $(X,f)$, an $\omega$-limit set $\omega_f(x)$ may contain a periodic point, hence a whole periodic orbit $P$. If $P$ is a proper subset of $\omega_f(x)$, then $\omega_f(x)$ is infinite (otherwise it would be a periodic orbit containing properly another periodic orbit, which is absurd) and, by~\cite{Shark}, no point of $P$ is isolated in $\omega_f(x)$.

By $B(x,\varepsilon)$ and $\overline{B}(x,\varepsilon)$ we denote the open and the closed, respectively, ball with center $x$ and radius $\varepsilon$.

\medskip\noindent\textbf{Proposition~\ref{P:special omega}.}
Let $X$ be a compact metric space and $f:X\to X$ a continuous map. If $f$ has an infinite $\omega$-limit set containing a periodic point and containing also an isolated point (isolated in the relative topology of the $\omega$-limit set), then $f$ has an infinite $\delta$-scrambled set for some $\delta>0$.

\begin{proof}
First we show that we may, without loss of generality, assume that the periodic point is a fixed point. To see this, suppose that $\omega_f(x)$ is infinite, $P\subseteq  \omega_f(x)$ is a periodic orbit of period $k>1$ and $z_0 \in \omega_f(x)$ is an isolated point of $\omega_f(x)$. Put $g:= f^k$. Then, as is well known (see e.g.~\cite[pp. 70-71]{BC}),
$\omega_{f}(x)=\bigcup_{i=0}^{k-1}\omega_{g}(f^{i}(x))$ and each of the sets in this union is mapped by $f$ onto the next one
$\bmod \,k$. It follows that each of the sets in the union is infinite and contains a point from $P$. One of them of course contains $z_0$ as an isolated point. Thus, the map $g$ has an infinite $\omega$-limit set containing a fixed point of $g$ and an isolated point. If we prove that $g$ has an infinite $\delta$-scrambled set, then it will be a $\delta$-scrambled set also for $f$.

So, assume that $\omega_f(x)$ is infinite, contains a fixed point $z$ of $f$ and an isolated point $z_0$. As we already know, $z_0 \neq z$. Choose $\delta>0$ such that $B(z_0, 4\delta) \cap \omega_f(x) = \{z_0\}$. In particular, $d(z,z_0)>4\delta$. Put $\overline{B}_3 := \overline{B}(z_0,3\delta)$ and $B_1 := B(z_0,\delta)$. Since $\omega_f(x)$ is infinite, any two points in the trajectory of $x$ under $f$ are distinct and $\Orb_f(x)$ is an infinite set. We are going to prove that it is a $\delta$-scrambled set for $f$ (in fact even $\{z\} \cup \Orb_f(x)$ is $\delta$-scrambled). To this end fix two points $x_2$ and $x_1 := f^m(x_2)$ in $\Orb_f(x)$, where $m$ is a positive integer.

Of course, $\omega_f(x_1) = \omega_f(x_2) = \omega_f(x) \ni z$. Since $z$ is a fixed point of $f$, for an arbitrarily small neighborhood of $z$ there exists $j$ such that both $f^j(x_2)$ and $f^{j+m}(x_2) = f^j(x_1)$ are in this neighborhood. Hence $x_1$ and $x_2$ are proximal (that is, $\liminf_{n\to+\infty}
d(f^n(x_1),f^n(x_2))=0$).

On the other hand, both $z_0$ and $z$ are in $\omega_f(x_1)$ and so the trajectory of $x_1$ visits $B_1$ infinitely many times and is outside $\overline{B}_3$ also infinitely many times. Taking further into account that $\omega_f(x_1) \cap (\overline{B}_3 \setminus B_1) =\emptyset$, we see that the trajectory of $x_1$ visits the compact set $\overline{B}_3 \setminus B_1$ only finitely many times and so it is eventually in $(X\setminus \overline{B}_3) \cup B_1$. Moreover, $x_1$ is proximal to the fixed point $z$ and so there are arbitrarily long intervals of consecutive times when the trajectory of $x_1$ is in  $X\setminus \overline{B}_3$. It follows that there are infinitely many times $j$ with $f^j(x_1) \in B_1$ and $f^{j-m}(x_1) = f^j(x_2) \in X\setminus \overline{B}_3$. For each such $j$ we have $d(f^j(x_1), f^j(x_2)) > 2\delta$. Hence $\limsup_{n\to\infty} d(f^n(x_1), f^n(x_2))\geq 2\delta >\delta$.
\end{proof}

The fact that every countable compact Hausdorff space has the periodic point property, is known~\cite{St}. Since there is a very short dynamical proof~\cite{Sar}, we repeat it.

\begin{lemma}\label{L:ppp}
Let $X$ be a countable compact Hausdorff space. Then every continuous map $f: X\to X$ has a periodic point.
\end{lemma}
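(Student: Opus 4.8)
The plan is to give the short dynamical proof attributed to Sarkovskii/Sarnak. First I would invoke Zorn's Lemma: consider the family of all nonempty closed subsets $Y\subseteq X$ satisfying $f(Y)\subseteq Y$, ordered by reverse inclusion. This family is nonempty (it contains $X$) and every chain has an upper bound, namely the intersection of the chain, which is nonempty by compactness (finite intersection property) and closed, and is clearly $f$-invariant. Hence there is a minimal such set $M$; by minimality, $f(M)=M$ (since $f(M)$ is again nonempty, closed and invariant and contained in $M$). Note $M$ is itself a countable compact Hausdorff space, so it is metrizable, and in particular $M$ is a Baire space.

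Next I would exploit countability and the Baire category theorem inside $M$. Write $M=\{m_1,m_2,\dots\}$. Since $M$ is a nonempty Baire space, it cannot be a countable union of nowhere dense sets, so some singleton $\{m_i\}$ fails to be nowhere dense, i.e.\ $m_i$ is an isolated point of $M$. Call this isolated point $p$. Because $f(M)=M$, there is a point $q\in M$ with $f(q)=p$; and in fact I claim $f^{-1}(p)\cap M$ is a nonempty open subset of $M$ whose only candidate for isolation we will use is $p$ itself. More directly: consider the orbit closure argument. The set $\omega_f(p)\subseteq M$ is a nonempty closed invariant subset, hence by minimality $\omega_f(p)=M$, so the forward orbit of $p$ is dense in $M$. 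In particular $p$, being isolated in $M$, must be visited by its own forward orbit, i.e.\ there exists $n\geq 1$ with $f^n(p)=p$. That is the periodic point.

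The step that needs the most care is the claim that a minimal invariant set $M$ forces $\omega_f(p)=M$ for every $p\in M$: one must check that $\omega_f(p)$ is nonempty (compactness), closed (always true), and strongly invariant $f(\omega_f(p))=\omega_f(p)$, or at least forward invariant, so that minimality applies. Forward invariance $f(\omega_f(p))\subseteq\omega_f(p)$ is immediate, and that already suffices to get $\omega_f(p)=M$ by minimality of $M$, hence the forward orbit $\{f^k(p):k\geq 0\}$ is dense in $M$. Combined with $p$ being isolated, density of the forward orbit yields $f^n(p)=p$ for some $n\geq 1$. I would also remark that the only place countability enters is in producing the isolated point via Baire category; everything else is the standard existence of minimal sets. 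This matches the known fact that countable compact Hausdorff spaces have the periodic point property.
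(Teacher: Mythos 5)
Your proof is correct and is essentially the paper's own argument: take a minimal nonempty closed invariant set $M$ (Zorn plus compactness), use countability and the Baire category theorem to find an isolated point of $M$, and use minimality (every orbit dense, i.e.\ $\omega_f(p)=M$) to conclude that the isolated point is periodic. The only cosmetic remark is that the conclusion should be drawn from $p\in\omega_f(p)$ (so the orbit returns to the isolated point at arbitrarily large times, giving $n\ge 1$), which you do in fact establish, rather than from mere density of the forward orbit, which trivially contains $p$ at time $0$.
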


\begin{proof}
Since $X$ is compact, there is a minimal set $M$ of the system $(X,f)$ (i.e. a minimal with respect to the inclusion, non-empty, closed, $f$-invariant subset of $X$).
Then $M$ is a compact Hausdorff, hence a Baire space. Since it is countable, applying Baire Category Theorem we get that it has an isolated point $z$. However, in the minimal system $(M,f|_M)$ every orbit is dense, therefore the isolated point $z$ is periodic (and $M$ is just the orbit of $z$).
\end{proof}

On a countable space, Li-Yorke chaos is impossible for cardinality reasons. However, the following holds.

\medskip\noindent\textbf{Corollary~\ref{C:cpct ctble}.}
Let $X$ be a compact \emph{countable} metric space and $f:X\to X$ a continuous map. If $f$ has a scrambled pair then
it has an infinite $\delta$-scrambled set for some $\delta>0$.

\begin{proof}
Let $\{x,y\}$ be a scrambled pair of $f$. At least one of the points $x,y$ has an infinite $\omega$-limit set, say $\omega_f(x)$ is infinite. Then $\omega_f(x)$, being compact and countable, has an isolated point (in the topology of $\omega_f(x)$) and being also invariant for $f$, by Lemma~\ref{L:ppp} contains a periodic point. Now apply Proposition~\ref{P:special omega}.
\end{proof}

\bibliographystyle{plain}

\end{document}